\begin{document}
\newcommand{\sidebar}[1]{\vskip10pt\noindent
 \hskip.70truein\vrule width2.0pt\hskip.5em
 \vbox{\hsize= 4truein\noindent\footnotesize\relax #1 }\vskip10pt\noindent}

\newcommand{\omegaone}{\ensuremath{\omega_1}}
\newcommand{\lomegaone}{\ensuremath{\mathcal{L}_{\omega_1,\omega}}}
\newcommand{\alephs}[1]{\ensuremath{\aleph_{#1}}}
\newcommand{\alephalpha}{\alephs{\alpha}}
\newcommand{\alephomegaone}{\alephs{\omegaone}}
\newcommand{\alephalphaplus}{\alephs{\alpha+1}}
\newcommand{\beths}[1]{\ensuremath{\beth_{#1}}}
\newcommand{\bethalpha}{\beths{\alpha}}
\newcommand{\bethomegaone}{\beths{\omegaone}}
\newcommand{\bethalphaplus}{\beths{\alpha+1}}
\newcommand{\z}{\ensuremath{\mathcal{Z}}}
\newcommand{\M}{\ensuremath{\mathcal{M}}}
\newcommand{\N}{\ensuremath{\mathcal{N}}}
\newcommand{\A}{\ensuremath{\mathcal{A}}}
\newcommand{\B}{\ensuremath{\mathcal{B}}}
\newcommand{\F}{\ensuremath{\mathcal{F}}}
\newcommand{\D}{\ensuremath{\mathcal{D}}}
\newcommand{\C}{\ensuremath{\mathcal{C}}}
\newcommand{\E}{\ensuremath{\mathcal{E}}}
\newcommand{\G}{\ensuremath{\mathcal{G}}}
\renewcommand{\P}{\mathbb{P}}
\renewcommand{\C}{\mathbb{C}}
\newcommand{\e}{\ensuremath{\varepsilon}}
\newcommand{\noe}{\ensuremath{\e\kern-0.5em{\slash}}}
\newcommand{\apspec}{AP-\ensuremath{Spec(\phi)} }
\newcommand{\JEPspec}{JEP-\ensuremath{Spec(\phi)} }
\newcommand{\mmspec}{MM-\ensuremath{Spec(\phi)} }
\newcommand{\apspecpsi}{AP-\ensuremath{Spec(\psi)} }
\newcommand{\JEPspecpsi}{JEP-\ensuremath{Spec(\psi)} }
\newcommand{\mmspecpsi}{MM-\ensuremath{Spec(\psi)} }
\newcommand{\specpsi}{\ensuremath{Spec(\psi)} }

\newcommand{\lang}[1]{\ensuremath{\mathcal{L}_{#1}}}
\newcommand{\langhat}{\ensuremath{\widehat{\lang{}}}}
\newcommand{\kappaplus}{\ensuremath{\kappa^{+}}}
\newcommand{\kappaplusplus}{\ensuremath{\kappa^{++}}}
\newcommand{\continuum}{\ensuremath{2^{\aleph_0}}}
\newcommand{\twoalephone}{\ensuremath{2^{\aleph_1}}}
\newcommand{\dom}{\mathop{\mathrm{dom}}\nolimits}
\newcommand{\ran}{\mathop{\mathrm{ran}}\nolimits}
\newcommand{\baleph}{\ensuremath{\B(\aleph_1)}}
\newcommand{\one}{\ensuremath{\mathbf{1}}}
\newcommand{\zero}{\ensuremath{\mathbf{0}}}
\newcommand{\bethtwo}{2^{2^\kappa}}

\def\K{\mbox{\boldmath $K$}}
\def\subm{\prec_{\mbox{\scriptsize \boldmath $K$}}}

\newtheorem{theorem}{Theorem}[section]
\newtheorem{lemma}[theorem]{Lemma}
\newtheorem{corollary}[theorem]{Corollary}
\newtheorem{proposition}[theorem]{Proposition}
\theoremstyle{definition}
\newtheorem{definition}[theorem]{Definition}
\newtheorem{claim}[theorem]{Claim}
\newtheorem{subclaim}{Subclaim}\newtheorem{note}{Note}
\newtheorem{observation}[theorem]{Observation}
\newtheorem{openq}[theorem]{Open Question}
\newtheorem{openqs}[theorem]{Open Questions}
\newtheorem{fact}[theorem]{Fact}
\newtheorem{example}[theorem]{Example}
\newtheorem{xca}[theorem]{Exercise}
\theoremstyle{remark}
\newtheorem{remark}[theorem]{Remark}
\newtheorem{convention}[theorem]{Convention}
\numberwithin{equation}{section}
\newtheorem{notation}[theorem]{Notation}
\newtheorem{conjecture}[theorem]{Conjecture}

\newcommand{\cP}{\mathcal{P}}
\newcommand{\cL}{\mathcal{L}}
\newcommand{\comment}[1]{}
\newcommand{\LS}{LS}

\title{A Lower Bound for the Hanf Number for Joint Embedding}

\author{ Will Boney}
\address{Department of Mathematics, Harvard University, Cambridge, MA, USA}
\email{wboney@math.harvard.edu}

\author {Ioannis  Souldatos}
\address{Aristotle University of Thessaloniki, Thessaloniki, 54124, Greece }
\email{souldatos@math.auth.gr}

\subjclass[2010]{Primary  03C48, 03E55 Secondary 03C55}
 
\keywords{Abstract Elementary Classes, Amalgamation, Joint Embedding, Hanf Number, Measurable Cardinals}

\date{\today}

\begin{abstract}
In \cite{BaldwinBoneyHanfNumbers} the authors show that if $\mu$ is a strongly compact cardinal, $\K$ is an Abstract Elementary 
Class (AEC) with $\LS(\K)<\mu$, and $\K$ satisfies joint embedding (amalgamation) cofinally below $\mu$, 
then $\K$ satisfies joint embedding (amalgamation) in all cardinals $\ge \mu$. The question was raised if the strongly compact 
upper bound was optimal. 

In this paper we prove the existence of an AEC $\K$ that can be axiomatized by an $\lomegaone$-sentence in a countable 
vocabulary, so that if $\mu$ is the first measurable cardinal, then 
\begin{enumerate}
 \item $\K$ satisfies joint embedding cofinally below $\mu$ ;
 \item $\K$ fails joint embedding cofinally below $\mu$; and
  \item $\K$ satisfies joint embedding above  $\mu$.
\end{enumerate}
Moreover, the  example can be generalized to an AEC $\K^\chi$ axiomatized in $\cL_{\chi^+, \omega}$, in a vocabulary of 
size $\chi$, such that (1)-(3) hold with $\mu$ being the first measurable above $\chi$.

This proves that the Hanf number for joint embedding is contained in the interval between 
the first measurable and the first strongly compact. Since these two cardinals can consistently coincide, the 
upper bound from \cite{BaldwinBoneyHanfNumbers} is consistently optimal.

This is also the first example of a sentence whose joint embedding spectrum is (consistently) neither an initial nor an  
eventual interval of cardinals. By Theorem \ref{club}, for any club $C$ on the first measurable $\mu$, it is consistent that JEP 
holds exactly on $\lim C$ and everywhere above $\mu$. 
\end{abstract}
 \maketitle

\section{Background}

In \cite[Conjecture 9.3]{GrossbergClassicationAECs}, Grossberg made (essentially) the following conjecture.
\begin{conjecture} For every $\kappa$, there is a cardinal $\mu(\kappa)$ such that for every Abstract Elementary Class (AEC) 
$\K$, if $\K$ has the $\mu(\LS(\K))$-amalgamation property\footnote{Throughout the paper we assume the common convention that 
$\LS(\K) \geq |\tau(\K)|$.}  then $\K$ has the $\lambda$-amalgamation property for all $\lambda\ge \mu(\LS(\K))$.  
\end{conjecture}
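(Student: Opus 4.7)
The plan is to attempt Grossberg's conjecture by taking $\mu(\lambda)$ to be the first strongly compact cardinal strictly above $\lambda$, and to strengthen the argument of \cite{BaldwinBoneyHanfNumbers} by dropping its ``cofinally below'' hypothesis. Fix an AEC $\K$ with $\LS(\K) = \lambda$ and assume only that $\K$ has the $\mu(\lambda)$-amalgamation property; the goal is to derive $\lambda'$-amalgamation for every $\lambda' \ge \mu(\lambda)$.

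First, I would fix an amalgamation problem $M_0 \subm M_1$, $M_0 \subm M_2$ at a cardinal $\lambda' \ge \mu := \mu(\lambda)$ and decompose each $M_i$ as a directed union of a $\subm$-increasing system of submodels of size less than $\mu$, arranged so that their traces on $M_0$ are coherent. For each matched pair of small submodels, the hypothesis supplies a local amalgam; the subtle step is arranging these local amalgams into a directed diagram indexed by (a cofinal subset of) $\mathcal{P}_\mu(\lambda')$. Then, using a $\mu$-complete fine ultrafilter on $\mathcal{P}_\mu(\lambda')$ guaranteed by strong compactness of $\mu$, I would form the ultraproduct of the local amalgams. Via Shelah's presentation theorem one passes to the associated $\mathrm{PC}$-class and applies \L{}o\'{s}'s theorem in the $\mu$-complete setting to conclude that the ultraproduct is an object of $\K$ into which both $M_1$ and $M_2$ embed over $M_0$.

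The main obstacle is precisely the point where \cite{BaldwinBoneyHanfNumbers} used cofinality: without amalgamation at arbitrarily high cardinals below $\mu$, building coherent local amalgams by induction may fail, and one must deploy compactness more directly --- perhaps by packaging the local amalgams as types in an auxiliary expansion and then applying a compactness principle uniformly. In view of the lower bound proved in the present paper, namely that the Hanf number for joint embedding (and presumably also for amalgamation) is at least the first measurable, I would not expect any large cardinal hypothesis strictly weaker than strong compactness to suffice, so $\mu(\lambda)$ is most plausibly exactly the first strongly compact above $\LS(\K)$.
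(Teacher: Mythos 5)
There is a genuine gap here, and it is worth being precise about its nature: the statement you are trying to prove is Grossberg's \emph{conjecture}, and the paper does not prove it --- it only records it as the motivating open problem. What the paper (following \cite{BaldwinBoneyHanfNumbers}) actually works with is the weaker Baldwin--Boney formulation, in which one assumes amalgamation \emph{cofinally below} a strongly compact $\mu$ and concludes it above $\mu$; the paper explicitly notes that the implication from amalgamation at a single cardinal (Grossberg's hypothesis, item (1) in Fact \ref{hanfme}) to eventual amalgamation remains open for both AP and JEP. So your plan of ``strengthening the argument of \cite{BaldwinBoneyHanfNumbers} by dropping its cofinally-below hypothesis'' is not a refinement of a known proof; it is the open problem itself.

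The concrete point where your sketch breaks is the step ``for each matched pair of small submodels, the hypothesis supplies a local amalgam.'' It does not. Your hypothesis is the $\mu(\LS(\K))$-amalgamation property, i.e.\ amalgamation at the one cardinal $\mu$, whereas your decomposition produces pieces of size strictly less than $\mu$ (elements of a directed system indexed by $\mathcal{P}_\mu(\lambda')$). Amalgamation at $\mu$ says nothing about amalgamating these small configurations, and this is exactly why Baldwin and Boney assume amalgamation at cofinally many cardinals below $\mu$: their fine-ultrafilter/\L o\'{s} argument takes an ultraproduct of amalgams that already exist at unboundedly many cardinalities below $\mu$, and strong compactness only does the gluing, not the local amalgamation. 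Your closing paragraph acknowledges this obstacle but replaces it with the hope of ``packaging the local amalgams as types'' and ``applying a compactness principle uniformly,'' which is not an argument; no known compactness principle manufactures amalgams of small systems from amalgamation at a single larger cardinal. Note also that the lower bound proved in this paper concerns the Baldwin--Boney Hanf number for JEP (failure of cofinal JEP below the first measurable transferring upward), so it neither supports nor settles the single-cardinal version you are after; whether Grossberg's $\mu(\lambda)$ exists at all, let alone equals the first strongly compact above $\lambda$, remains open.
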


This cardinal $\mu(\LS(\K))$ (if it exists) is called the Hanf number for amalgamation, and we can define similarly what is means 
for 
a cardinal to be the Hanf number for joint embedding. 

Baldwin and Boney in \cite{BaldwinBoneyHanfNumbers} proved the existence of a Hanf number for joint embedding (and amalgamation 
and a few other variants of these two properties) from large cardinals, although their definition of `Hanf number' is slightly 
different from Grossberg's:  if $\mu$ is a strongly compact cardinal, $\K$ is an AEC with $\LS(\K)<\mu$, and $\K$ satisfies 
amalgamation cofinally below $\mu$, then $\K$ satisfies amalgamation in all cardinals $\ge \mu$. 

The confusion comes from the fact that Hanf number is most often used for the property of model existence.  In this case, there 
are a number of equivalent formulations.
\begin{fact}\label{hanfme} 
Let $(\K,\subm)$ be an AEC, let $P(\lambda)$ be the property that ``there exists a model of size $\lambda$ in $\K$''(model-existence), and let $\kappa$ be the Hanf number of $\K$. 
The following properties are all equivalent:
\begin{enumerate}
 \item $P(\lambda)$ holds in \emph{some}  $\lambda\ge\kappa$;
 \item $P(\lambda)$ holds in \emph{every} $\lambda\ge\kappa$;
 \item $P(\lambda)$ holds for \emph{cofinally} many $\lambda<\kappa$;
 \item $P(\lambda)$ holds for \emph{eventually} many $\lambda<\kappa$; and
 \item $P(\lambda)$ holds in \emph{every} cardinality.
\end{enumerate}

\end{fact}

These equivalences heavily use the downward-closed nature of model existence (and the computation of the Hanf number using 
Morley's Omitting Types Theorem for (3) and (4)). However this is not the case for other properties, such as joint embedding and 
amalgamation, and the 
equivalence of (1)-(5) fails in these cases.

More importantly, the computation of the Hanf number does not use any of the specifics of the AEC $\K$ except for the $LS(\K)$. So, two AECs with the same $LS(\K)$ will yield the same Hanf number. 
This motivates Definition \ref{hanfp}, the Hanf number for a general property $P$. 
\\

\emph{For the purposes of this paper we take the definition of the Hanf number to be that of Baldwin-Boney 
(cf. \cite{BaldwinBoneyHanfNumbers}).} 

\begin{definition}\label{hanfp} 
Fix a property $P(\lambda)$.  A function $\mu_P(\kappa)$ from cardinals to cardinals will be called the \emph{Hanf number for $P$} iff it 
satisfies the following:

\begin{enumerate}
 \item If $\K$ is an AEC that satisfies property $P(\lambda)$ for cofinally many $\lambda<\mu_P(\LS(\K))$, then $\K$ 
satisfies $P(\lambda)$ in every $\lambda\ge\mu_P(\LS(\K))$ and
\item $\mu_P(\LS(\K))$ is the least such cardinal. 
\end{enumerate}

If $P$ is obvious from the context, then we will write $\mu(\kappa)$ instead of $\mu_P(\kappa)$.
\end{definition}

In the language of Fact \ref{hanfme}, Definition \ref{hanfp} defines the Hanf number for property $P$ to be the least cardinal such that the implication $(3)\rightarrow (2)$  holds true for all AECs with the same LS-number. 
Because the Hanf number is a function, we investigate certain values of this function. We fix property $P$ and $LS(\K)$  and we ask the value of $\mu_P(LS(\K))$. 
The example stated in the abstract proves that $\mu_{JEP}(\chi)$ cannot be smaller than the first measurable cardinal above $\chi$. 

Properties $P$ of interest are joint embedding, amalgamation, categoricity, existence of maximal models, etc. In this paper, 
we focus on the properties of joint embedding (JEP) and amalgamation (AP), especially the former.

In a parallel line of inquiry we ask the following: Fix $LS(\K)$ and let $P$ be the property of joint embedding, or amalgamation, or some other property of interest. For which of the implications between $(1)$-$(5)$, other than $(3)\rightarrow (2)$,  there exists some $\kappa$ that makes the implication true? What is the least such $\kappa$? Provide counterexamples when no such $\kappa$ exists. 

One of the contributions of this paper is to provide an example that (3) does not imply (4) when $P$ is JEP and $\kappa$ is the 
first measurable\footnote{In this example $LS(\K)=\aleph_0$, but the construction works for any $LS(\K)$  smaller than the first measurable cardinal.}. In 
particular, as stated in the abstract, there exists an AEC $\K$ that both satisfies and fails JEP cofinally below the first measurable.  

This example also relates to Question 4.0.2 in \cite{BKLdap}, that asks whether there is an AEC, in particular one 
defined by an $\lomegaone$-sentence, whose finite amalgamation spectrum is not an interval (that is, amalgamation restricted to 
the $\aleph_n$ for $n<\omega$).  If we drop the restriction to a finite spectrum and replace AP by JEP, we prove that the answer 
is positive. Notice that while our example exhibits quite interesting JEP-spectrum, the same is not true for the AP-spectrum. By 
Theorem \ref{negative}, $\K^\chi$ fails amalgamation in every cardinal above $2^{\chi^+}$. This leaves open
the corresponding question (does (3) imply (4)), when $P$ is the amalgamation property and $\kappa$ is the first measurable cardinal. 

If $P$ is either JEP or AP, and $\kappa$ the first measurable or the first strongly compact cardinal, the status of the implications $(1)\rightarrow (2)$ (as envisioned by Grossberg) and $(2)\rightarrow (3)$ also remains open.

\begin{openqs}\
 \begin{enumerate}[(i)]
  \item Is there an AEC $\K$ that fails JEP eventually below $\kappa$, $\kappa$ being the first measurable or the first super 
compact, but satisfies JEP in all cardinalities above $\kappa$?
\item Same question as (i), but satisfy JEP in \emph{one} cardinality above $\kappa$?
 \end{enumerate}
\end{openqs}

Although in this paper we will not deal with it, we ought to say that similar considerations about categoricity  have long 
occupied researchers in the area, e.g. the main open problem for AECs (Shelah's Categoricity Conjecture) is whether there exists some $\kappa$ such that $(1)$ implies $(2)$ for categoricity\footnote{Categoricity means that $P(\lambda)$ is the property that ``there exists a unique model (up to isomorphism) of size $\lambda$''.}.

A lower bound for $\mu_{AP}(\kappa)$ is given in \cite{KLH}. For every cardinal $\kappa$ 
and every $\alpha$ with $\kappa\le\alpha<\kappaplus$ there exists an AEC 
$W_\alpha$ in a vocabulary of size $\kappa$, $W_\alpha$ has  countable L\"{o}wenheim-Skolem number,  and it satisfies 
amalgamation up 
to $\beth_{\alpha}$, but fails amalgamation in $\beth_{\kappaplus}$. This proves that $\mu_{AP}(\kappa)\ge\beth_{\kappaplus}$.

On the other hand, results from \cite{BaldwinBoneyHanfNumbers}, prove that if $LS(\K)=\kappa$, then $\mu_{AP}(\kappa)$ is  no larger than the first strongly compact above $\kappa$. 
Combining the results form \cite{KLH} and \cite{BaldwinBoneyHanfNumbers}, $\mu_{AP}(\kappa)$, the Hanf number for AP, is contained in 
the interval between $\beth_{\kappaplus}$ and the first strongly compact above $\kappa$. As noted in 
\cite{BaldwinBoneyHanfNumbers}, 
the gap between these two cardinals is immense. 

The picture is not very promising for JEP either. From \cite{BaldwinBoneyHanfNumbers}, we know that the first strongly compact above $\kappa$ is also an upper bound for $\mu_{JEP}(\kappa)$. 
A lower bound is given by the examples from \cite{JEP} and \cite{Complete}. 
From Fact 3.37 in \cite{JEP}, for every countable $\alpha$ there exists an AEC $(\K_\alpha,\subset)$ defined by a 
universal $\lomegaone$-sentence such that $\LS(\K_\alpha)=\aleph_0$ and $\K_\alpha$ satisfies JEP up to and including 
$\beth_\alpha$, but it has no larger models. This proves that $\bethomegaone$ is a lower bound for $\mu_{JEP}(\aleph_0)$, the Hanf 
number for JEP. 

The above example can easily be generalized: for every $\kappa\le\alpha<\kappaplus$, there exists an AEC $(\K_\alpha,\subset)$ 
defined by a universal $\mathcal{L}_{\kappaplus,\omega}$-sentence such that $\LS(\K_\alpha)=\kappa$ and $\K_\alpha$ 
satisfies JEP up to and including $\beth_\alpha$, but it has no larger models. This proves that as in the case for AP, $\mu_{JEP}(\kappa)$ is contained in the interval between $\beth_{\kappaplus}$ and the first strongly compact above $\kappa$.

Stimulated by early versions of \cite{Complete}, Baldwin and Shelah announced in \cite{BaldwinShelahHanfNumbers} that (under 
certain set theoretic hypotheses) there exists a \emph{complete} $\lomegaone$-sentence with maximal models in cofinally many 
cardinalities below the first measurable. They also note that every model of size equal to or larger than the first 
measurable has a proper $\lomegaone$-elementary extension; one can take a countably complete ultrapower. This proves the first 
measurable cardinal to be equal to $\mu_P(\aleph_0)$, where $P$ is the property of ``maximality''. More precisely, ``maximality'' is defined to be the property $P(\lambda)$: the exists a maximal model of size $\lambda$. 

Notice that this introduces yet another notion of a Hanf number. If $\mu$ is the first measurable cardinal, then no 
$\lomegaone$-sentence has a maximal model above $\mu$. In the language of Fact \ref{hanfme},  letting $P$ be 
``maximality'' and $\kappa$ be the first measurable, (1) and (2) are equivalent for the trivial reason that they are both always false. The example from 
\cite{BaldwinShelahHanfNumbers} proves that (3) does not imply (1) (or (2)) for maximality. So, this notion of a Hanf number does 
not fit into Definition \ref{hanfp}, which is our working definition for this paper. 

The main result of this paper is to prove a lower bound for the Hanf number for JEP. In particular, we show that $\mu_{JEP}(\aleph_0)$ 
is bounded below by the first measurable cardinal. Since by results of \cite[Theorem 3.1]{MagidorIdentity}, the first measurable 
and the first 
strongly compact can be consistently equal, this proves that the known bounds are consistently optimal. Of course, the first 
measurable and the first strongly compact can also be different. So, this leaves open the question whether the lower bound 
from the current paper or the upper bound from \cite{BaldwinBoneyHanfNumbers} can be improved.

Our example was inspired by \cite{BaldwinShelahHanfNumbers}. In fact, the idea of the main construction in the current paper 
appears as Example 3.0.3. in \cite{BaldwinShelahHanfNumbers}. Our contribution is to compute the JEP- and AP- spectra of this 
example. Notice that our $\lomegaone$-sentence (as well as the sentence from Example 3.0.3 in \cite{BaldwinShelahHanfNumbers}) 
is incomplete. This is in contrast to the main construction of \cite{BaldwinShelahHanfNumbers}, where Baldwin and Shelah go 
in great lengths to create a \emph{complete} sentence. It is an open question whether the results from this paper can extend to 
the complete example of \cite{BaldwinShelahHanfNumbers}. 

In Section \ref{vocabulary} we describe the construction. The  JEP-spectrum is given in Section \ref{JEP} and the AP-spectrum in 
Section \ref{apsection}.

For the main definitions of Abstract Elementary Classes, we refer the reader to Baldwin \cite{BaldwinCategoricity}; however, they 
are not necessary to understand the main construction and theorems.

\section{The main example}\label{vocabulary}

Fix an infinite cardinal $\chi$, although this can be taken to be $\aleph_0$.  We will define a language $\tau^\chi$ and an 
AEC $\K^\chi$.  

The prototypical elements of $\K^\chi$ will be structures of the form 
$$M = \left(\kappa, \cP(\kappa), {}^\chi \cP(\kappa), \in, \vee,\wedge,\cdot^c,\one, \cap, \pi_\alpha\right)_{\alpha<\chi}$$
where
\begin{enumerate}
	\item ${}^\chi \cP(\kappa)$ are the $\chi$-length sequences from $\cP(\kappa)$ (the power set of $\kappa$);
	\item $\in$ is the (extensional) `member of' relation between $\kappa$ and $\cP(\kappa)$;
	\item $\left( \cP(\kappa),\vee,\wedge,\cdot^c,\one\right)$ is the standard Boolean Algebra;
	\item $\cap:{}^\chi \cP(\kappa) \to \cP(\kappa)$ returns the intersection of all elements of the sequence; and
	\item $\pi_\alpha:{}^\chi \cP(\kappa) \to \cP(\kappa)$ returns the $\alpha$th element of the sequence.
\end{enumerate}

However, we don't want the entire theory of these structures and only demand that the members of $\K^\chi$ satisfy a particular 
subset of their $\cL_{\chi^+, \omega}$-theory.

Formally, set $\tau^\chi$ to consist of
\begin{itemize}
	\item sorts $K$, $P$, and $Q$;
	\item constant $\one$;
	\item unary functions $\cdot^c$, $\cap$, and $\pi_\alpha$ for $\alpha < \chi$;
	\item binary functions $\wedge$ and $\vee$; and
	\item a binary relation $\e$.
\end{itemize}

Now we define a sentence $\psi^\chi \in \cL_{\chi^+, \omega}$ that consists of the conjunction of the following first-order 
assertions
\begin{enumerate}
	\item $\e \subset K \times P$ is extensional;
	\item $\left( P,\vee,\wedge,\cdot^c,\one\right)$ is a Boolean Algebra;
	\item $\e$ interacts with the Boolean Algebra operations in the expected way: $\one$ contains every element of $K$, a 
complement contains exactly the elements not in the original, etc;
	\item $P$ contains all singletons: $\forall x \in K \exists Y \in P \forall z \in K\left(z \in Y \iff z = x\right)$; and
\end{enumerate}
the conjunction of the following infinitary assertions:
\begin{enumerate}
	\item[(5)] the functions $\pi_\alpha:Q \to P$ are jointly extensional in the sense that
	$$\forall A, B \in Q \left( A = B \iff \bigwedge_{\alpha < \chi}\pi_\alpha(A) = \pi_\alpha(B)\right)$$
	\item[(6)] the function $\cap:Q \to P$ returns the intersection in the sense that
	$$\forall A \in Q \forall x \in K \left( x \e \cap A \iff \bigwedge_{\alpha<\chi} x \e \pi_\alpha(A)\right)$$
\end{enumerate}

Given any model $M$ of $\psi^\chi$, there is a natural embedding of the Boolean Algebra induced on $P^M$ into the Boolean Algebra 
on $\cP(K^M)$, the powerset of $K^M$.

\begin{definition}
Let $M \subset N$ model $\psi^\chi$ and $X \in P^M$.
\begin{enumerate}
	\item Define $\widehat{X}^M := \{x\in K^M\mid M \vDash x\e X\}$.
	\item  \emph{$M$ and $N$ agree on $X$} iff $\widehat{X}^M=\widehat{X}^N$. 
	\item  \emph{$M$ and $N$ agree on finite subsets} iff they agree on every $X\in P^M$ such that $\widehat{X}^M$ is a 
finite 
subset of $K^M$. 
	\item  Define $\K^\chi$ to be the collection of all models of $\psi^\chi$ of size $\ge\chi$. For $M,N\in \K^\chi$, let 
$M\subm^\chi N$ if $M\subset N$ and $M$ and $N$ 
agree on finite subsets. \\
	Following standard convention, we will often use $\K^\chi$ to refer to the pair $(\K^\chi, \subm^\chi)$.
\end{enumerate}
\end{definition}

Crucial to our later analysis, the lack of full elementarity in the $\subm^\chi$ relation means that $M$ and $N$ do not need to 
agree on all sets.  In particular, $M$ and $N$ agree on $\one$ iff $K^M = K^N$.  Note also that $\widehat{X}^M = \widehat{X}^N 
\cap K^M$.

\begin{proposition}\label{aec}
For each $\chi$, $(\K^\chi,\subm^\chi)$ is  an Abstract Elementary Class with L\"{o}wenheim-Skolem number $\chi$. 
\end{proposition}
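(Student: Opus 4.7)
The plan is to verify the AEC axioms in turn. Closure of $\K^\chi$ under isomorphism, and the requirement that $\subm^\chi$ refines $\tau^\chi$-substructure, are immediate from the definition. For transitivity of $\subm^\chi$ and for coherence, the observation is that the ``agreement on finite subsets'' clause passes through sandwiches: if $M_0 \subm^\chi M_1 \subm^\chi M_2$ and $X \in P^{M_0}$ has $\widehat{X}^{M_0}$ finite, then $\widehat{X}^{M_0} = \widehat{X}^{M_1}$ is still finite, so $\widehat{X}^{M_1} = \widehat{X}^{M_2}$ and we get agreement of $M_0$ with $M_2$; and for coherence one uses the sandwich $\widehat{X}^{M_0} \subseteq \widehat{X}^{M_1} \subseteq \widehat{X}^{M_2} = \widehat{X}^{M_0}$ to force equality throughout.

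For the chain axioms, given a $\subm^\chi$-chain $\langle M_i \rangle$ with union $M$, I would verify $M \models \psi^\chi$ clause by clause: the first-order clauses (1)--(4) descend to $\tau^\chi$-substructure unions, and for the infinitary clauses (5)--(6) any single instance involves a single $A \in Q^M$ which, together with its $\chi$-many projections $\pi_\alpha(A)$ and its $\cap A$, already sits in some $M_i$, where the clause is known. Each $M_i \subm^\chi M$ then follows by induction along the chain: if $\widehat{X}^{M_i}$ is finite, then $\widehat{X}^{M_j} = \widehat{X}^{M_i}$ for all $j \geq i$, hence $\widehat{X}^M = \widehat{X}^{M_i}$. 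The upper-bound clause is analogous: any $X \in P^M$ with $\widehat{X}^M$ finite lives in some $M_i$ (where already $\widehat{X}^{M_i}$ is finite), and $M_i \subm^\chi N$ then pins down $\widehat{X}^N = \widehat{X}^{M_i} = \widehat{X}^M$.

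The key step is L\"owenheim-Skolem. Given $N \in \K^\chi$ and $A \subseteq N$, I construct $M = \bigcup_{n<\omega} A_n$, starting from an $A_0 \supseteq A$ of size $|A| + \chi$ chosen from $N$, and closing each $A_n$ under: (i) all $\tau^\chi$-operations (including the $\chi$-many $\pi_\alpha$ and the constant $\one$); (ii) for each $x \in K \cap A_n$, the singleton witness required by clause (4) of $\psi^\chi$; and, crucially, (iii) for each $X \in P \cap A_n$, either all of $\widehat{X}^N$ if it is finite in $N$, or $\omega$-many arbitrarily chosen elements of $\widehat{X}^N$ if it is infinite in $N$. Each stage adds at most $\chi$ new elements, so $|M| \leq |A| + \chi$. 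Clause (iii) is exactly what secures $M \subm^\chi N$: any $X \in P^M$ with $\widehat{X}^M$ finite must have been handled by the finite case of (iii) (since the infinite case would have made $\widehat{X}^M$ infinite), giving $\widehat{X}^N \subseteq K^M$ and hence $\widehat{X}^M = \widehat{X}^N$. Verifying $M \models \psi^\chi$ then reduces to observing that (i) and (ii) supply the closure and singleton witnesses needed for clauses (1)--(4), while clauses (5)--(6) transfer down from $N$ because all relevant $\pi_\alpha$- and $\cap$-values lie in $M$ and the interpretation of $\e$ on $M$ is inherited from $N$.
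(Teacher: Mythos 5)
Your proof is correct, but it takes a genuinely different route from the paper's. The paper dismisses the proposition in one line, suggesting a definitional expansion of $\tau^\chi$ by ``finite subset'' functions (say $F_n:K^n\to P$ with $\widehat{F_n(x_1,\dots,x_n)}^M=\{x_1,\dots,x_n\}$), after which $\subm^\chi$ becomes plain substructure and $\K^\chi$ becomes a universal class; closure under unions of chains, coherence, and L\"owenheim--Skolem number $\chi$ then come for free from the general theory of universal classes. You instead verify the axioms by hand, and your two key observations are exactly the right ones: finite traces $\widehat{X}^{M}$ cannot grow along $\subm^\chi$-extensions (this drives transitivity, coherence, both chain axioms, and is also what preserves the singleton clause (4) of $\psi^\chi$ under unions, where mere substructure would not suffice), and the closure step (iii) in your L\"owenheim--Skolem argument, which forces every $X\in P^M$ with finite trace in $M$ to have the same trace in $N$. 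Two small touch-ups: each closure stage adds up to $|A|+\chi$ (not $\chi$) new elements, which does not affect the bound $|M|\le |A|+\chi$; and extensionality of $\e$ on $M$ (clause (1)) is not inherited by an arbitrary substructure and is not supplied by (i) and (ii) alone --- it holds because, for $X\neq Y$ in $P^M$, their symmetric difference lies in $P^M$ by Boolean closure and step (iii) guarantees it has nonempty trace on $K^M$. The trade-off: the paper's route is shorter and explains why the class is so tame (it is universal after a harmless expansion), while yours is self-contained and makes visible exactly where the agreement-on-finite-subsets clause of $\subm^\chi$ is used.
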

\begin{proof} This is an easy argument and we leave the details to the reader. One approach is that a definitional expansion (by 
adding ``finite subset'' functions) turns $\K^\chi$ into a universal class.
\end{proof}

\begin{observation}\label{bounds} \
\begin{enumerate}
	\item It  is immediate from the definition that the size of $P$ is bounded by $2^{|K|}$ and the size of $Q$ by 
$|P|^{\chi}$. There is no restriction on the size of $K$. Therefore,  $\psi^\chi$ has models in all infinite cardinalities.
	\item \label{canon}Given any model $M \vDash \psi^\chi$, we can find an isomorphic copy  $\widehat{M}$ of $M$ such that 
	\begin{enumerate}
		\item $K^{\widehat{M}} = K^M$;
		\item $P^{\widehat{M}} \subset \cP(K^M)$; and
		\item $Q^{\widehat{M}} \subset {}^\chi \cP(K^M)$.
	\end{enumerate}
	However, this choice is very non-canonical.
\end{enumerate}
\end{observation}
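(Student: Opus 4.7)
\medskip

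My plan is to handle the two parts separately, since each reduces to a straightforward unpacking of the axioms of $\psi^\chi$.

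For (1), the cardinality bounds come immediately from the two extensionality requirements. The axiom that $\e\subset K\times P$ is extensional says that each $X\in P$ is determined by the set $\{x\in K : x\e X\}\subseteq K$, hence $|P|\le 2^{|K|}$. Similarly, axiom (5) says that each $A\in Q$ is determined by the $\chi$-sequence $(\pi_\alpha(A))_{\alpha<\chi}\in {}^\chi P$, hence $|Q|\le |P|^\chi$. For the existence statement, I would exhibit a concrete model of $\psi^\chi$ in every infinite cardinality $\kappa\ge\chi$ by taking $K=\kappa$, $P=\cP(\kappa)$, and $Q={}^\chi\cP(\kappa)$, with $\e$, $\vee$, $\wedge$, $\cdot^c$, $\one$, $\cap$, and $\pi_\alpha$ interpreted in the obvious set-theoretic way. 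Checking the axioms (1)--(6) for this structure is routine.

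For (2), I would use the two extensionality axioms to construct the isomorphism directly. Define
\[
f:P^M\to\cP(K^M),\qquad f(X)=\widehat{X}^M=\{x\in K^M : M\vDash x\e X\},
\]
which is injective by the extensionality of $\e$. Then define
\[
g:Q^M\to {}^\chi\cP(K^M),\qquad g(A)=\bigl(f(\pi_\alpha^M(A))\bigr)_{\alpha<\chi},
\]
which is injective by axiom (5). Now let $\widehat M$ be the structure with $K^{\widehat M}=K^M$, $P^{\widehat M}=\mathrm{ran}(f)$, $Q^{\widehat M}=\mathrm{ran}(g)$, and with each operation and relation on $\widehat M$ obtained by transporting its interpretation in $M$ along $(\mathrm{id}_{K^M},f,g)$. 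By construction $(\mathrm{id}_{K^M},f,g)$ is an isomorphism $M\cong\widehat M$, and inclusions (a)--(c) hold.

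There is no real obstacle here; the observation is essentially a bookkeeping remark. The only thing worth noting is that axiom (6) is not needed to get the embedding into $\cP(K^M)\times{}^\chi\cP(K^M)$: it only guarantees the compatibility between the transported $\cap^{\widehat M}$ and the literal set-theoretic intersection, which is automatic once one restricts $g(A)$'s coordinates to $\cP(K^M)$ via $f$. The ``non-canonical'' caveat in the statement reflects that $P^{\widehat M}$ need not be all of $\cP(K^M)$, nor $Q^{\widehat M}$ all of ${}^\chi\cP(K^M)$, and that there is no preferred choice of $f$ when $M$ has non-trivial automorphisms of its Boolean algebra part.
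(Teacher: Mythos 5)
Your cardinality bounds and your part (2) are fine: the paper offers no written proof of this observation, and your transport-of-structure argument via $f(X)=\widehat{X}^M$ and $g(A)=(f(\pi^M_\alpha(A)))_{\alpha<\chi}$, with injectivity coming from extensionality of $\e$ and from axiom (5), is exactly the intended content of item (2), including the remark that axiom (6) plays no role in getting the embedding.

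There is, however, a genuine slip in your argument for the existence clause of (1). You propose to witness ``models in all infinite cardinalities'' by the standard model $K=\kappa$, $P=\cP(\kappa)$, $Q={}^\chi\cP(\kappa)$, but that structure has cardinality $2^\kappa$ (since $|\cP(\kappa)|=2^\kappa$ and $|{}^\chi\cP(\kappa)|=2^{\kappa\cdot\chi}=2^\kappa$ for $\kappa\ge\chi$), not $\kappa$; so your family of examples only realizes cardinalities of the form $2^\kappa$ and misses, e.g., any cardinal that is not a power of $2$, besides being restricted to $\kappa\ge\chi$. The point of the paper's phrase ``there is no restriction on the size of $K$'' is the opposite strategy: let $K$ carry the cardinality and keep $P$ and $Q$ small. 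For any infinite $\lambda$ take $K=\lambda$, let $P$ be the algebra of finite and cofinite subsets of $\lambda$ (so $|P|=\lambda$, and all singletons are present as required by axiom (4)), and take $Q=\emptyset$ (axioms (5) and (6) are universally quantified over $Q$, so an empty $Q$ is permissible; the paper itself uses $Q^N=\emptyset$ in the proof of Theorem \ref{failure}). This yields a model of $\psi^\chi$ of size exactly $\lambda$, which is what the claim requires; your standard model is the right object for Lemma \ref{universal}, but not for this existence statement.
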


\begin{definition} \
\begin{enumerate}[(a)]
 \item An AEC $\K$ satisfies the \emph{$\kappa$-Joint Embedding Property} or JEP($\kappa)$ iff $\K_\kappa$ is not empty and for 
all $M_0, M_1 \in \K_\kappa$, there are $N \in \K$ and $\K$-embeddings $f_\ell:M_\ell \to N$. The \emph{joint embedding spectrum 
of $\K$} or JEP-spectrum is the collection of all cardinals $\kappa$ such that $\K$ satisfies JEP($\kappa)$.
\item  Similarly, define AP($\kappa)$ and the AP-spectrum for the amalgamation property.
\item A model $M$ of $\psi^\chi$ is of type $(\lambda,\kappa)$ with $\lambda\ge\kappa$, if $|M|=\lambda$ and 
$|K^M|=\kappa$. 
\item Given $M \in \K^\chi$, we say that an ultrafilter $U$ on the Boolean Algebra $P^M$ is \emph{$Q^M$-complete} iff for every 
$A\in Q^M$, if $\pi_\alpha^M(A) \in U$ for all $\alpha <\chi$, then $\cap^M(A)\in U$. 
\end{enumerate}
\end{definition}
%

\section{The Joint Embedding Property}\label{JEP}
In this section we determine the joint embedding spectrum of $\K^\chi$. It turns out (Lemma \ref{jointembed}) that the question 
of 
when two models can be jointly embedded depends on whether they can be extended in a certain way, called $K$-extendibility.

\begin{definition}\label{ext-def}
Let $M \in \K^\chi$.
\begin{enumerate}
	\item $M$ is \emph{$K$-extendible} iff there is $N \in \K^\chi$ such that $M \subm^\chi N$ and $K^M \subsetneq K^N$.
	\item $M$ is \emph{$K$-maximal} iff whenever $N \in \K^\chi$ has $M\subm^\chi N$, then $K^M \subsetneq K^N$.
\end{enumerate}
\end{definition}

So given a $K$-maximal model, it is either $K$-extendible or truly maximal.  The next lemma connects $K$-extendibility to the 
existence of sufficiently complete ultrafilters.

\begin{lemma}\label{extension} 
Let $M\in\K^\chi$.  The following are equivalent:
\begin{enumerate}
	\item $M$ is $K$-extendible.
	\item For all cardinals $\lambda$, there exists some $N\in \K^\chi$ with $M\subm^\chi N$ and $|K^N\setminus 
K^M|=\lambda$ .
	\item There exists a non-principal ultrafilter $U$ on $P^M$ that is $Q^M$-complete.
\end{enumerate}
\end{lemma}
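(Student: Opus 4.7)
The plan is to prove the cycle $(2) \Rightarrow (1) \Rightarrow (3) \Rightarrow (2)$. The implication $(2) \Rightarrow (1)$ is immediate by taking $\lambda = 1$ (or anything positive).

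For $(1) \Rightarrow (3)$, fix $N \in \K^\chi$ with $M \subm^\chi N$ and pick $a \in K^N \setminus K^M$. Define
\[
U := \{\, Y \in P^M : N \models a \mathrel{\e} Y \,\}.
\]
That $U$ is an ultrafilter on the Boolean algebra $P^M$ follows from axiom (3) of $\psi^\chi$ holding in $N$, which says $\e$ interacts with the Boolean operations as literal membership does. For $Q^M$-completeness, note that $\pi_\alpha^N(A) = \pi_\alpha^M(A)$ and $\cap^N(A) = \cap^M(A)$ for every $A \in Q^M$, and apply axiom (6) inside $N$. Non-principality is where agreement on finite subsets enters: a short argument using extensionality of $\e$ and axiom (4) shows that the atoms of $P^M$ are exactly the singletons $\{x\}^M$ for $x \in K^M$. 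So if $U$ were principal it would contain some such $\{x\}^M$, forcing $a \in \widehat{\{x\}^M}^N = \widehat{\{x\}^M}^M = \{x\}$, i.e. $a = x \in K^M$, a contradiction.

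For $(3) \Rightarrow (2)$, given a non-principal $Q^M$-complete ultrafilter $U$ and any cardinal $\lambda$, I would construct $N$ as follows. Set $K^N := K^M \sqcup \{a_\xi : \xi < \lambda\}$ and define a map $\iota : P^M \to \cP(K^N)$ by
\[
\iota(Y) \;:=\; \widehat{Y}^M \;\cup\; \bigl(\{a_\xi : \xi < \lambda\} \text{ if } Y \in U,\ \emptyset \text{ otherwise}\bigr).
\]
The ultrafilter axioms make $\iota$ a Boolean algebra homomorphism, and extensionality in $M$ makes it injective. After identifying $P^M$ with $\iota(P^M)$ inside $\cP(K^N)$, let $P^N$ be the Boolean subalgebra of $\cP(K^N)$ generated by $P^M$ together with the new singletons $\{a_\xi\}$, and set $Q^N := Q^M$ with $\pi_\alpha^N$ and $\cap^N$ inherited from $M$.

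The verification that $N \models \psi^\chi$ is routine for axioms (1)--(5); the crux is axiom (6), which for $A \in Q^M$ and a new element $a_\xi$ reduces to the equivalence
\[
\cap^M(A) \in U \;\iff\; \pi_\alpha^M(A) \in U \text{ for all } \alpha < \chi.
\]
The forward direction follows from $\cap^M(A) \le \pi_\alpha^M(A)$, and the reverse direction is exactly $Q^M$-completeness of $U$. Agreement on finite subsets follows from non-principality: if $\widehat{Y}^M$ is finite then $Y$ is a finite join of singleton-atoms, none of which lies in $U$, so $Y \notin U$ and hence $\widehat{Y}^N = \widehat{Y}^M$. The main obstacle is in the design itself---recognizing that $Q^M$-completeness of $U$ is \emph{precisely} the condition needed to make axiom (6) survive on the new elements; once that is seen, all the verifications are mechanical.
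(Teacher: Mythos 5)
Your proof is correct and takes essentially the same approach as the paper: for $(1)\Rightarrow(3)$ you define the ultrafilter of $P^M$-sets containing a new $K$-point (non-principality from agreement on finite subsets, completeness from axiom (6) in $N$), and for the converse you extend $K^M$ by $\lambda$ new points whose $\e$-membership is decided by $U$, with $Q$ unchanged, $Q^M$-completeness giving axiom (6) and non-principality giving agreement on finite subsets. The only cosmetic difference is that you package the construction as an explicit Boolean embedding $\iota$ into $\cP(K^N)$ and explicitly close the cycle as $(3)\Rightarrow(2)$, whereas the paper puts $M$ in canonical form and adds the new finite subsets of $K^N$ by hand (its construction likewise yields $\lambda$ new elements).
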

\begin{proof} The implication $(2)\Rightarrow(1)$ is immediate. We start by proving that $(1)\Rightarrow(3)$.
Suppose we have $M, N \in \K^\chi$ that witness the $K$-extendibility of $M$. By assumption, there exists some  $d 
\in K^N\setminus K^M$. We then build 
an ultrafilter $U_d\subset P^M$ as follows: for $X \in P^M$, we set

$$X \in U_d \text{ if and only if } N \vDash d \e X$$

Then $U_d$ is an ultrafilter by the standard set-theoretic argument; see, e.g. Theorem 5.6 in 
\cite{KanamoriHigherInfinite}.\footnote{Another approach is to observe that $U_d$ is the set of all sets whose $\mu$-measure 
equals to $1$, where $\mu$ is defined on $P^M$ by $\mu(X)=1$ when $N\vDash d\in X$, and $\mu(X)=0$ otherwise. $U_d$ is an 
ultrafilter because $\mu$ is $2$-valued.} We only prove $U_d$ is non-principal and $Q^M$-complete. 

Assume that $U_d$ is principal and generated by some $X_0\in P^M$. By definition, $P^M$ contains all finite subsets of 
$K^M$.\footnote{This means that if $K_0$ is a finite subset of $K^M$, there exists some $X\in P^M$ with $\widehat{X}^M=K_0$. See 
also 
clause (4) of $\psi^\chi$.} It 
follows that $\widehat{X_0}^M$ is a singleton, say $\widehat{X_0}^M=\{x\}$. By definition of $\subm $, $M,N$ agree on all finite 
subsets of $K^M$. In 
particular, $\widehat{X_0}^M=\widehat{X_0}^N$. By the definition of $U_d$, this implies that $d=x\in 
K^M$. Contradiction. 

Let $A\in Q^M$ such that $X_\alpha :=\pi^M_\alpha(A)\in U_d$, for all $\alpha < \chi$.  Since $N \vDash d \e X_\alpha$ for every 
$\alpha < \chi$, we must have $N \vDash d \e \cap A$.  Thus, $\cap^N(A) = \cap^M(A) \in U_d$.
\newline

Next we prove that $(3)\Rightarrow(2)$. Suppose that $M \in \K^\chi$ and there is a non-principal, $Q^M$-complete ultrafilter 
$U$ on $P^M$. Without loss of generality $M$ satisfies the 
conclusions of Observation \ref{bounds}.(\ref{canon}).  Define $N$ to be an $\prec^\chi_{\K}$-extension of $M$ as follows: 
Extend $K^M$ by $\lambda$-many new elements to form $K^N$. Say $K^N = K^M \cup \lambda$. Define $\e^N$ on the new elements: For all $x\in K^N\setminus K^M$ and all $X \in P^M$, stipulate that 
$$N \vDash x \e X \iff X \in U.$$ 
This implies that for every $X\in P^M$, either $\widehat{X}^N=\widehat{X}^M$, or $\widehat{X}^N=\widehat{X}^M\cup (K^N\setminus 
K^M)$. Which of the two is the 
case is determined by membership in $U$.

Additionally, we extend $P$ so that $P^N$ is the Boolean Algebra that is generated by $P^M$ together with all finite subsets of $K^N$. In particular, for every $F$ finite 
subset of $K^N$ which is not a subset of $K^M$, we introduce a new element $X_F$ in $P^N$ and define $\e^N$ is such a way that 
$\widehat{X_F}^N=F$.

Moreover, $M$ and $N$ agree on $Q, \pi_\alpha$ and $\cap$, and on $\e\upharpoonright_{K^M\times P^M}$, and $\vee,\wedge,^c$ 
have the intended interpretations. 

We verify that $N$ is in $\K^\chi$ and $M\subm^\chi N$. $\e^N$ is extensional because the same is true for $\e^M$ and by definition of $N$. Condition (2) is immediate and condition (3) is easy 
to verify using the fact that $U$ is an ultrafilter. We leave the details to the reader. (4) and (5) are immediate. 

We only prove (6) using the $Q$-completeness of $U$. If $A\in Q^N=Q^M$, then $Y=\cap^N A$ is in $U$ if and 
only if for all $\alpha$, $\pi_\alpha(A)\in U$. The one direction follows from $U$ being an ultrafilter, the other direction 
follows from $U$ being $Q$-complete. 

We take cases: If $Y=\cap^N A\in U$, then $\widehat{Y}^N=\widehat{Y}^M\cup (K^N\setminus 
K^M)$ and $\widehat{\pi_\alpha(A)}^N=\widehat{\pi_\alpha(A)}^M\cup (K^N\setminus K^M)$, for all $\alpha$. If $Y\notin U$, then 
$\widehat{Y}^N=\widehat{Y}^M$ and $\widehat{\pi_\alpha(A)}^N=\widehat{\pi_\alpha(A)}^M$, for \emph{some} $\alpha$ (maybe not for 
all $\alpha$). In 
either case, $\widehat{Y}^N=\cap_\alpha \widehat{\pi_\alpha(A)}^N$, which completes the proof that $N\in \K^\chi$. 

It remains to prove $M\subm^\chi N$. The fact that $M\subset N$ is immediate. Let $X$ be in $P^M$ so that $\widehat{X}^M$ is a 
finite set. We need to prove that $\widehat{X}^N=\widehat{X}^M$. Since $\widehat{X}^M$ is finite and $U$ is non-principal, 
$X\notin U$. Therefore, $\widehat{X}^N=\widehat{X}^M$ as desired.
\end{proof}

\begin{corollary}\label{extension2}
 Assume $M\prec^\chi_{\K} N$. If $M$ is not $K$-extendible, then the same is true for $N$. 
\end{corollary}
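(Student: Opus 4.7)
The plan is to argue by contraposition: assume $N$ is $K$-extendible and deduce that $M$ is $K$-extendible. First I would unpack the definition to obtain $N' \in \K^\chi$ with $N \prec^\chi_\K N'$ and $K^N \subsetneq K^{N'}$. Next, using transitivity of $\prec^\chi_\K$ (an AEC axiom, cf.\ Proposition \ref{aec}), I would chain $M \prec^\chi_\K N \prec^\chi_\K N'$ into $M \prec^\chi_\K N'$. Finally, since $M \subset N$ gives $K^M \subseteq K^N$, the strict inclusion $K^N \subsetneq K^{N'}$ upgrades to $K^M \subsetneq K^{N'}$, so $N'$ witnesses the $K$-extendibility of $M$. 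One should briefly verify that the chain $M \prec^\chi_\K N \prec^\chi_\K N'$ indeed gives agreement of $M$ and $N'$ on finite subsets, but this is immediate: for $X \in P^M$ with $\widehat{X}^M$ finite, $\widehat{X}^N = \widehat{X}^M$ is finite and $\widehat{X}^{N'} = \widehat{X}^N$, hence $\widehat{X}^{N'} = \widehat{X}^M$.

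An alternative route goes through the ultrafilter criterion of Lemma \ref{extension}(3). Given a non-principal $Q^N$-complete ultrafilter $U$ on $P^N$, I would define $U_M := U \cap P^M$. Because $P^M$ sits inside $P^N$ as a Boolean subalgebra with matching operations, and because $Q^M \subseteq Q^N$ with $\pi_\alpha$ and $\cap$ agreeing across the inclusion, $U_M$ would inherit the ultrafilter and $Q^M$-completeness properties directly. Non-principality of $U_M$ would follow as in the proof of Lemma \ref{extension}: a non-principal ultrafilter on $P^N$ excludes every singleton $\{x\}$ for $x \in K^N$ (since $P^N$ contains all singletons by clause (4) of $\psi^\chi$, and those are exactly the atoms), so in particular no atom of $P^M$ lies in $U_M$.

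I do not expect any genuine obstacle; the content of the corollary reduces to transitivity of the AEC ordering combined with the strict growth of the $K$-sort passing through $M$. I would therefore present the direct transitivity argument as the main proof, since it avoids the extra bookkeeping of checking that principality is preserved under restriction of the ultrafilter, and simply note the ultrafilter version as a parallel observation.
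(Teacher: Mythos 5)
Your main argument is correct, but it takes a genuinely different route from the paper. The paper's proof is essentially your ``alternative route'': it notes that $K^N=K^M$ (otherwise $N$ itself would witness the $K$-extendibility of $M$), then supposes $N$ were $K$-extendible, takes the non-principal $Q^N$-complete ultrafilter on $P^N$ supplied by Lemma \ref{extension}, and restricts it to the sub-Boolean algebra $P^M$ to contradict, via Lemma \ref{extension} again, the non-extendibility of $M$. Your primary argument instead stays at the level of the class: contrapose, take $N'$ witnessing the $K$-extendibility of $N$, use transitivity of $\subm^\chi$ (which you verify directly, and which is also part of $\K^\chi$ being an AEC, Proposition \ref{aec}) to get $M\subm^\chi N'$, and observe $K^M\subseteq K^N\subsetneq K^{N'}$, so $N'$ witnesses the $K$-extendibility of $M$. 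This is more elementary: it uses only Definition \ref{ext-def} and the ordering, bypassing the ultrafilter characterization entirely, whereas the paper's version keeps everything routed through Lemma \ref{extension}, which is the tool it reuses later (e.g.\ in Corollary \ref{failJEP1}). One small caveat on your parenthetical ultrafilter sketch: the step ``no atom of $P^M$ lies in $U_M$'' uses the agreement of $M$ and $N$ on finite subsets built into $\subm^\chi$ --- it guarantees that an element of $P^M$ representing a singleton of $K^M$ still represents a singleton in $N$ and hence is an atom of $P^N$; without that, an atom of the subalgebra $P^M$ need not be an atom of $P^N$ (its $\widehat{\cdot}^N$-set could pick up new points of $K^N\setminus K^M$) and could lie in a non-principal $U$. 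Since your main proof is the transitivity one, this does not affect correctness.
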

\begin{proof}
 By definition of $K$-extendibility, $K^N$ must equal $K^M$. Since $M\subset N$, $P^M$ is a sub-Boolean Algebra of $P^N$. If $N$ 
were $K$-extendible, by Lemma \ref{extension}, there would be a non-principal ultrafilter $U$ on $P^N$ which is $Q^N$-complete. 
The restriction of $U$ on $P^M$ leads to a contradiction. 
\end{proof}

The key factor for determining whether two $M_0,M_1\in \K^\chi$ can be jointly embedded is the size of 
$K^{M_0}$ and $K^{M_1}$. If $|K^{M_0}|=|K^{M_1}|$, then this is possible as seen by the next Lemma \ref{universal}. In fact more 
is true: For every $\kappa$, there is a single structure $M\in \K^\chi$ that can embed all other $M'\in\K^\chi$ with 
$|K^{M'}|=\kappa$. 
This universality property holds true because the Boolean Algebra $(P,\vee,\wedge,^c,\one,\zero)$ interpreted in 
$M$ (the universal model) coincides with the Boolean Algebra of the power set of $K^M$. 
If $|K^{M_0}|<|K^{M_1}|$, then $M_0,M_1$ can be jointly embedded if and only if $M_0$ is $K$-extendible. This is the content of 
Lemma \ref{jointembed}

\begin{lemma}\label{universal} 
Let $\kappa\geq \chi$. There exists a model $M\in \K^\chi$ of type $(2^\kappa,\kappa)$ such that for any other $N\in\K^\chi$ with 
$|K^{N}|=\kappa$, there is an $\subm^\chi$-embedding from $N$ to $M$. Moreover, $M$ is $K$-maximal.
\end{lemma}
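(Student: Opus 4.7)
The plan is to take $M$ to be the canonical model of $\K^\chi$ over $\kappa$: set $K^M=\kappa$, $P^M=\cP(\kappa)$, and $Q^M={}^\chi\cP(\kappa)$, interpreting $\e$, the Boolean operations, and the functions $\pi_\alpha$ and $\cap$ in the natural set-theoretic way. First I would verify that $M\vDash\psi^\chi$ (each clause is tautological in this concrete representation) and compute $|M|=2^\kappa$, using $\chi\le\kappa$ to get $|Q^M|=(2^\kappa)^\chi=2^{\kappa\cdot\chi}=2^\kappa$. Hence $M$ is of type $(2^\kappa,\kappa)$.

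For the universality claim, take $N\in\K^\chi$ with $|K^N|=\kappa$. By Observation~\ref{bounds}(\ref{canon}) I would replace $N$ by an isomorphic copy having $P^N\subseteq\cP(K^N)$ and $Q^N\subseteq{}^\chi\cP(K^N)$, then transport along a bijection $K^N\to\kappa$ so that one may additionally assume $K^N=K^M=\kappa$, $P^N\subseteq P^M$, and $Q^N\subseteq Q^M$. In this representation $\e^N$ is actual membership and the Boolean operations on $P^N$ agree with those on $P^M$; the crucial point is that $\one^N=K^N=\kappa=\one^M$, so complementation coincides in the two models. Thus the identity inclusion $N\hookrightarrow M$ is a $\tau^\chi$-embedding, and it is a $\subm^\chi$-embedding because for $X\in P^N$ with $\widehat{X}^N$ finite, both $\widehat{X}^N$ and $\widehat{X}^M$ equal $X$ as subsets of $\kappa$.

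For $K$-maximality, suppose $M\subm^\chi N$ with $K^N=K^M=\kappa$; I would show $M=N$. Given $Y\in P^N$, set $X:=\widehat{Y}^N$. Since $X\subseteq K^N=\kappa$, we have $X\in\cP(\kappa)=P^M\subseteq P^N$. Because $M$ is a substructure of $N$ and $X$ is represented canonically in $M$, $\widehat{X}^N\cap K^M=\widehat{X}^M=X$, and $K^N=K^M$ forces $\widehat{X}^N=X=\widehat{Y}^N$. Extensionality of $\e$ on $P$ in $N$ then yields $X=Y$, so $Y\in P^M$; hence $P^N=P^M$. An analogous argument via extensionality of $Q$ (clause (5) of $\psi^\chi$), together with the fact that $Q^M$ already contains every $\chi$-sequence from $P^M=P^N$, gives $Q^N=Q^M$. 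Therefore $M=N$, so every proper $\subm^\chi$-extension of $M$ strictly enlarges $K^M$.

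The main obstacle I anticipate is the bookkeeping around Observation~\ref{bounds}(\ref{canon}): one must align $K^N$ with $\kappa$ before the set-theoretic inclusion can respect the Boolean complement, since $\one$ is forced to be the top element of its own $K$-sort in each model. Once this alignment is secured, both the universality and the $K$-maximality claims reduce to immediate applications of the extensionality clauses of $\psi^\chi$.
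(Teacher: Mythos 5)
Your proposal is correct and follows essentially the same route as the paper: you take the standard model $\left(\kappa,\cP(\kappa),{}^\chi\cP(\kappa)\right)$, embed any $N$ with $|K^N|=\kappa$ via a bijection of the $K$-sorts (your canonicalization plus inclusion is the paper's map $f$ in disguise), and derive $K$-maximality from extensionality of $\e$ and the $\pi_\alpha$, which is exactly the paper's one-line argument spelled out in detail. Your extra care about aligning $\one$ and complementation with the $K$-sort is a correct elaboration of points the paper leaves implicit, not a different method.
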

\begin{proof}  Let $M$ be the standard model generated by $K^M=\kappa$, $P^M=\cP(\kappa)$, $Q^M={}^\chi\cP(\kappa)$. We claim 
that 
$M$ is the desired model.

Let $N \in \K^\chi$ with $|K^N|=\kappa$.  Define a $\K^\chi$-embedding $f$ from $N$ to $M$ as follows: $f$ is a bijection from 
$K^{N}$ to $\kappa$; this is possible by cardinality assumptions. For each $X\in P^{N}$, set $f(X) = \{f(x) \mid x \in 
\widehat{X}^{N}\} \in \cP(\kappa)$.  For each $A\in Q^{N}$, let $f(A)$ be the sequence $\left\langle 
f\left(\pi_\alpha^N(A)\right)\mid \alpha < \chi\right\rangle \in {}^\chi \cP(\kappa)$.  It is immediate that $f$ is an 
$\K^\chi$-embedding. 

For the moreover, the extensionality of $\in$ and the $\pi_\alpha$ imply that any extension of $M$ cannot grow $P$ or $Q$ without 
growing $K$.
\end{proof}

\begin{lemma}\label{jointembed} Let $M_0,M_1\in\K^\chi$. If $|K^{M_0}|<|K^{M_1}|$, then $M_0$ and $M_1$ can be jointly embedded 
if and only if $M_0$ is $K$-extendible. 
\end{lemma}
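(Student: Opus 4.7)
The plan is to prove each direction separately, relying on Lemmas \ref{extension} and \ref{universal}.

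For the forward direction, suppose $M_0$ and $M_1$ can be jointly embedded, so there exist $N\in\K^\chi$ and $\K^\chi$-embeddings $f_\ell:M_\ell\to N$. Without loss of generality I would identify $M_0$ with its image under $f_0$, so that $M_0 \subm^\chi N$. Then since $f_1$ is an embedding, $|K^N|\ge |f_1(K^{M_1})| = |K^{M_1}| > |K^{M_0}|$, forcing $K^{M_0}\subsetneq K^N$. This is exactly the witness to $K$-extendibility of $M_0$.

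For the reverse direction, assume $M_0$ is $K$-extendible and set $\kappa := |K^{M_1}|$. By clause (2) of Lemma \ref{extension}, there exists $N\in\K^\chi$ with $M_0\subm^\chi N$ and $|K^N\setminus K^{M_0}|=\kappa$; since $|K^{M_0}|<\kappa$, this gives $|K^N|=\kappa$. Now invoke Lemma \ref{universal} to obtain a universal model $M^*\in\K^\chi$ of type $(2^\kappa,\kappa)$ into which every structure in $\K^\chi$ with $K$-part of size $\kappa$ embeds. In particular, there are $\K^\chi$-embeddings $g:N\to M^*$ and $h:M_1\to M^*$. Composing $g$ with the inclusion $M_0\subm^\chi N$ yields an embedding $M_0\to M^*$, and together with $h$ this jointly embeds $M_0$ and $M_1$ into $M^*$.

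Neither direction presents a serious obstacle; the main point to watch is simply bookkeeping about cardinalities, in particular choosing the $K$-extension of $M_0$ to have exactly the same $K$-size as $M_1$ so that the universal model of Lemma \ref{universal} applies to both. The proof is essentially a direct combination of the two preceding lemmas.
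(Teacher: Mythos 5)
Your proof is correct and follows essentially the same route as the paper: the forward direction reads off $K$-extendibility directly from the joint embedding (the paper phrases this via Lemma \ref{extension}, you via the definition after identifying $M_0$ with its image, which amounts to the same thing), and the reverse direction uses clause (2) of Lemma \ref{extension} to grow $K^{M_0}$ to size $|K^{M_1}|$ and then embeds both models into the universal model of Lemma \ref{universal}. No gaps worth noting.
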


\begin{proof} Left-to-right: Suppose that $M_0$ and $M_1$ can be jointly embedded into some $M\in\K^\chi$. Then 
$|K^M|\ge|K^{M_1}|>|K^{M_0}|$. By Definition \ref{ext-def}, $M_0$ is $K$-extendible.

Right-to-left: Assume that $M_0$ is $K$-extendible. By Lemma \ref{extension}, there exists some $M_0'\in\K^\chi$, $M_0\subm^\chi 
M_0'$ and $|K^{M_0'}|=|K^{M_1}|$. Use Lemma \ref{universal} to joint embed $M_0'$ and $M_1$ to a common $M$. Then $M$ serves also 
as the joint embedding of $M_0$ and $M_1$. 
\end{proof}

Recall that there are no countably complete, non-principal ultrafilters on any set of size less than the first measurable 
cardinal.  More generally, fixing $\chi$, there is no $\chi^+$-complete, non-principal ultrafilter on any set of size less than 
the first measurable larger than $\chi$ (if one exists).  If there is no measurable above $\chi$, then there are no 
$\chi^+$-complete, non-principal ultrafilters at all. We utilize these facts to prove the existence of models that are not 
$K$-extendible.

\begin{definition}
Given a cardinal $\kappa$, set 
$$m(\kappa):= \inf \{\lambda \mid \lambda > \kappa \text{ and }\lambda\text{ is measurable}\}$$
If there are no measurable cardinals above $\kappa$, then set $m(\kappa) = \infty$ (which is greater than every ordinal by 
convention).
\end{definition}

\begin{lemma}\label{maximal} 
Let $\chi \leq \kappa < m(\chi)$. There exists a model $M \in \K^\chi$ of type $(2^\kappa,\kappa)$ that is $\subm^\chi$-maximal. 
In particular, $M$ is not $K$-extendible. 
\end{lemma}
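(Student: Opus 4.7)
The plan is to take $M$ to be the canonical universal structure produced in the proof of Lemma \ref{universal}, namely the model with $K^M = \kappa$, $P^M = \cP(\kappa)$, $Q^M = {}^\chi \cP(\kappa)$ and the natural interpretations. This model has $|K^M| = \kappa$ and total size $|\cP(\kappa)| + |{}^\chi\cP(\kappa)| = 2^\kappa$ (using $\chi \leq \kappa$), so it is of type $(2^\kappa, \kappa)$. By Lemma \ref{universal} it is already $K$-maximal, so any proper $\subm^\chi$-extension of $M$ must strictly enlarge $K$. Thus it suffices to show that $M$ is not $K$-extendible, as this combined with $K$-maximality immediately yields $\subm^\chi$-maximality.

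To rule out $K$-extendibility, I will invoke Lemma \ref{extension}: it is enough to prove that $P^M$ carries no non-principal $Q^M$-complete ultrafilter. Since $Q^M = {}^\chi \cP(\kappa)$ contains \emph{every} $\chi$-sequence from $P^M$, the condition of being $Q^M$-complete is literally the condition of being $\chi^+$-complete (closed under intersections of length $\chi$). Equivalently, an ultrafilter on the Boolean algebra $\cP(\kappa)$ is the same data as an ultrafilter on the set $\kappa$, and we are asking for one which is non-principal and $\chi^+$-complete.

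Now I apply the standard fact (e.g. Kanamori, following the usual completeness-number argument) that the existence of a non-principal $\chi^+$-complete ultrafilter on a set of size $\kappa$ forces the existence of a measurable cardinal $\lambda$ with $\chi < \lambda \leq \kappa$. Since $\kappa < m(\chi)$ by hypothesis, no such $\lambda$ exists, so no such ultrafilter exists on $P^M$, so by Lemma \ref{extension} the model $M$ is not $K$-extendible. Together with $K$-maximality this gives $M$ is $\subm^\chi$-maximal.

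There is no real obstacle; the argument is essentially a citation of Lemma \ref{extension} together with the set-theoretic characterization of measurability via completeness numbers of ultrafilters. The only mild subtlety is verifying that for this specific universal $M$ the abstract $Q^M$-completeness of Lemma \ref{extension} coincides with genuine $\chi^+$-completeness of a set-theoretic ultrafilter on $\kappa$, which is transparent from the choice $Q^M = {}^\chi\cP(\kappa)$.
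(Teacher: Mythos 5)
Your proposal is correct and matches the paper's own argument essentially verbatim: take the standard model on $(\kappa, \cP(\kappa), {}^\chi\cP(\kappa))$, use Lemma \ref{universal} for $K$-maximality, and rule out $K$-extendibility via Lemma \ref{extension} by observing that a non-principal $Q^M$-complete ultrafilter on $P^M=\cP(\kappa)$ would be a non-principal ultrafilter on $\kappa$ closed under $\chi$-fold intersections, contradicting $\kappa < m(\chi)$. Your phrasing of the completeness condition (closure under intersections of length $\chi$, i.e.\ $\chi^+$-completeness in the usual terminology) is if anything slightly more careful than the paper's.
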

\begin{proof} Let $M$ be the standard model on $\left(\kappa, \cP(\kappa), {}^\chi \cP(\kappa)\right)$ as in Lemma 
\ref{universal}.  We know it is $K$-maximal, and we claim that $M$ is maximal.   By the remark following Definition 
\ref{ext-def}, 
it suffices to show that it is not $K$-extendible.  

If it were $K$-extendible, then Lemma \ref{extension} would imply that there is a $Q^M$-complete, non-principal ultrafilter $U$ 
on the Boolean Algebra $P^M$.  However, since $P^M = \cP(\kappa)$, $U$ is an ultrafilter on $\kappa$.  Moreover, since $Q^M = 
{}^\chi \cP(\kappa)$, the $Q^M$-completeness of $U$ is a different name for $\chi^+$-completeness (in the normal sense).  This 
would imply that there is a $\chi^+$-complete ultrafilter on $\kappa$; however, this contradicts $\kappa < m(\chi)$.
\end{proof}

\begin{theorem}\label{failure} 
Let $\chi \leq \kappa<m(\chi)$. Then $\K^\chi$ fails $JEP(2^\kappa)$.
\end{theorem}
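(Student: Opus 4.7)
The plan is to exhibit two models $M_0, M_1 \in \K^\chi$, both of size $2^\kappa$, that cannot be jointly embedded. Since Lemma \ref{jointembed} says that two members of $\K^\chi$ with $K$-sorts of different sizes are jointly embeddable precisely when the smaller one is $K$-extendible, I will arrange that $|K^{M_0}| = \kappa$ with $M_0$ not $K$-extendible, while $|K^{M_1}| > \kappa$.

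For $M_0$, the hypothesis $\chi \leq \kappa < m(\chi)$ is exactly what Lemma \ref{maximal} requires, so I take $M_0$ to be the standard model on $\left(\kappa, \cP(\kappa), {}^\chi \cP(\kappa)\right)$ of type $(2^\kappa,\kappa)$, which is not $K$-extendible. For $M_1$ I need a model in $\K^\chi$ of total size $2^\kappa$ whose $K$-sort is strictly larger than $\kappa$. A concrete choice is to set $K^{M_1} = 2^\kappa$, take $P^{M_1}$ to be the Boolean subalgebra of $\cP(K^{M_1})$ consisting of all finite and cofinite subsets, and let $Q^{M_1}$ consist of the constant $\chi$-sequences from $P^{M_1}$, with $\pi_\alpha^{M_1}$ returning the (constant) value and $\cap^{M_1}$ defined likewise. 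A routine verification shows $M_1 \models \psi^\chi$: the first-order axioms are immediate, while the infinitary assertions (5) and (6) follow trivially from the fact that every element of $Q^{M_1}$ is a constant sequence. Counting coordinates gives $|M_1| = 2^\kappa$, and by construction $|K^{M_1}| > \kappa$.

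Applying Lemma \ref{jointembed} to $M_0$ and $M_1$ completes the argument: their $K$-sorts have different sizes, so any joint embedding would force $M_0$ to be $K$-extendible, contradicting the choice of $M_0$ via Lemma \ref{maximal}. Hence $\K^\chi_{2^\kappa}$ contains two models with no common upper bound in $\K^\chi$, so JEP$(2^\kappa)$ fails. No step presents any real difficulty; the only small subtlety is arranging $Q^{M_1}$ so that all sequence-intersections land in $P^{M_1}$, which is why I restrict to constant sequences rather than allowing arbitrary $\chi$-sequences from the finite/cofinite algebra, whose intersections need not be finite or cofinite.
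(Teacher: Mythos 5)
Your proof is correct and follows essentially the same route as the paper: take the non-$K$-extendible model of type $(2^\kappa,\kappa)$ from Lemma \ref{maximal}, pair it with a model of type $(2^\kappa,2^\kappa)$ built from the finite/cofinite algebra, and apply Lemma \ref{jointembed}. The only (harmless) difference is that the paper simply takes the $Q$-sort of the second model to be empty, which sidesteps the closure-under-intersections issue you handled with constant sequences.
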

\begin{proof} Let $M$ be the model from Lemma \ref{maximal}. $M$ is of type $(2^\kappa,\kappa)$. Let $N$ be any model in $\K$ of 
type $(2^\kappa,2^\kappa)$. Take for instance $K^N=2^\kappa$, $P^N$ contains all finite and co-finite subsets of $K^N$ and $Q^N$ 
is empty. 

By Lemma \ref{jointembed}, $M$ and $N$ can be jointly embedded if and only if $M$ is $K$-extendible. But $M$ is not 
$K$-extendible 
by Lemma \ref{maximal}, which proves the theorem.  
\end{proof}

Once we are above a measurable, $K$-extendibility and, therefore, joint embedding become trivial to accomplish.

\begin{lemma}\label{extendible}
If $\kappa \geq m(\chi)$, then every model in $\K^\chi_\kappa$ is $K$-extendible.
\end{lemma}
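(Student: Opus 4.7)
The plan is to build, for each $M \in \K^\chi_\kappa$, a non-principal $Q^M$-complete ultrafilter on $P^M$; then the $K$-extendibility of $M$ follows from the implication $(3)\Rightarrow(1)$ of Lemma \ref{extension}. Set $\mu := m(\chi)$, so $\chi < \mu \leq \kappa$, and fix a $\mu$-complete (hence $\chi^+$-complete) non-principal ultrafilter $V$ on $\mu$.

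The first step, and really the only delicate one, is to show that $|K^M| \geq \mu$. If instead $|K^M| < \mu$, then the strong inaccessibility of the measurable cardinal $\mu$, combined with the cardinality bounds from Observation \ref{bounds} (namely $|P^M|\leq 2^{|K^M|}$ and $|Q^M| \leq |P^M|^{\chi}$), would force $|M| < \mu \leq \kappa$, contradicting $|M| = \kappa$. One has to split on whether $|K^M| \geq \chi$ or $|K^M| < \chi$, but strong inaccessibility of $\mu$ handles both sub-cases uniformly.

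Once $|K^M| \geq \mu$, identify $\mu$ with a subset of $K^M$ and pull $V$ back to a subset $U$ of the Boolean algebra $P^M$ by declaring
$$X \in U \iff \widehat{X}^M \cap \mu \in V.$$
Checking that $U$ is an ultrafilter reduces directly to the corresponding properties of $V$. Non-principality is witnessed via clause (4) of $\psi^\chi$, which supplies an element $Y_x \in P^M$ with $\widehat{Y_x}^M = \{x\}$ for each $x \in K^M$; since $\widehat{Y_x}^M \cap \mu$ is either empty or a singleton of $\mu$, it cannot lie in the non-principal $V$, so no $Y_x$ lies in $U$. For $Q^M$-completeness, given $A\in Q^M$ with $\pi_\alpha(A)\in U$ for every $\alpha<\chi$, clause (6) of $\psi^\chi$ identifies $\widehat{\cap^M(A)}^M$ with $\bigcap_{\alpha<\chi} \widehat{\pi_\alpha(A)}^M$, and the $\chi^+$-completeness of $V$ then forces $\widehat{\cap^M(A)}^M \cap \mu \in V$, i.e.\ $\cap^M(A) \in U$. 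Lemma \ref{extension} concludes.

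The main obstacle is really the cardinality bound $|K^M|\geq \mu$: a priori nothing rules out a model whose $K$-sort is small compared to the rest of the structure, and it is only the strong-limit property of the measurable $\mu$ that closes off this possibility. After that, the pullback construction of $U$ is the natural one and the verifications are routine.
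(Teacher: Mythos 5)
Your proof is correct, but it takes a different route from the paper's. Both arguments share the one delicate step you isolate: since $m(\chi)$ is a strong limit, the bounds of Observation \ref{bounds} force $|K^M| \geq m(\chi)$ for any $M \in \K^\chi_\kappa$ with $\kappa \geq m(\chi)$ (the paper makes exactly this observation). After that the paper does not build an ultrafilter on $P^M$ at all: it takes an $m(\chi)$-complete non-principal ultrafilter $U$ and forms the ultrapower $\prod M/U$, invoking \L o\'{s}' Theorem for $\cL_{m(\chi),m(\chi)}$ to get $M \subm^\chi \prod M/U$ with $K^{\prod M/U} \supsetneq K^M$, which witnesses $K$-extendibility directly from the definition. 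You instead inject $\mu = m(\chi)$ into $K^M$, pull the measure back along the canonical embedding of $P^M$ into $\cP(K^M)$ via $X \in U \iff \widehat{X}^M \cap \mu \in V$, and then quote the implication $(3)\Rightarrow(1)$ of Lemma \ref{extension}. Your verifications (ultrafilter via the Boolean homomorphism, non-principality via clause (4) --- noting, as in the paper's proof of Lemma \ref{extension}, that a principal generator would have to be a singleton --- and $Q^M$-completeness via clause (6) and $\chi^+$-completeness of $V$) all go through. The two proofs are secretly close: the new element $d = [f]_U$ of the ultrapower, for $f$ an injection of $\mu$ into $K^M$, induces precisely your pulled-back ultrafilter. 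What your route buys is that it avoids infinitary \L o\'{s} and the direct check that the ultrapower is a $\subm^\chi$-extension, reusing instead the machinery already established in Lemma \ref{extension}; what the paper's route buys is brevity and the stronger conclusion that $M$ has a proper $\cL_{m(\chi),m(\chi)}$-elementary extension, in the spirit of the countably complete ultrapower remark from Baldwin--Shelah.
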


\begin{proof}
Let $U$ be a $m(\chi)$-complete, non-principal ultrafilter and $M \in \K^\chi_\kappa$.  Since measurable cardinals are strong 
limits, $|K^M| \geq m(\chi)$.  By \L o\'{s}' Theorem for infinitary logics, the ultrapower $\prod M/U$ is a $\cL_{m(\chi), 
m(\chi)}$-elementary extension of $M$ (up to isomorphism).  In particular, $M \subm^\chi \prod M/U \in \K^\chi$.  Moreover, 
$K^{\prod M/U} = \prod K^M/U \supsetneq K^M$.  This witnesses that $M$ is $K$-extendible.
\end{proof}

\begin{theorem}\label{abovemeasurable} 
If $\kappa\ge m(\chi)$, then $\K^\chi$ satisfies $JEP(\kappa)$. 
\end{theorem}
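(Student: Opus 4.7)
The plan is to reduce to the universal model of Lemma \ref{universal} after adjusting the $K$-sizes, using $K$-extendibility to inflate the smaller $K$-sort when needed.

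Let $M_0, M_1 \in \K^\chi_\kappa$ be given. First I would observe that $|K^{M_i}| \geq m(\chi)$ for $i = 0,1$. This holds because $m(\chi)$ is measurable, hence a strong limit, so if $|K^{M_i}| < m(\chi)$ then $|P^{M_i}| \leq 2^{|K^{M_i}|} < m(\chi)$ and $|Q^{M_i}| \leq |P^{M_i}|^\chi < m(\chi)$, forcing $|M_i| < m(\chi) \leq \kappa$, contradicting $M_i \in \K^\chi_\kappa$.

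Next I would split into two cases based on $K$-sizes. If $|K^{M_0}| = |K^{M_1}| = \lambda$, then Lemma \ref{universal} immediately provides a model into which both $M_0$ and $M_1$ embed, namely the standard model on $(\lambda, \cP(\lambda), {}^\chi\cP(\lambda))$. If the $K$-sizes differ, without loss of generality say $|K^{M_0}| < |K^{M_1}|$. Since $M_0 \in \K^\chi_\kappa$ and $\kappa \geq m(\chi)$, Lemma \ref{extendible} applies and $M_0$ is $K$-extendible. Then the equivalence $(1)\Leftrightarrow(2)$ of Lemma \ref{extension} gives an extension $M_0 \subm^\chi M_0'$ in $\K^\chi$ with $|K^{M_0'} \setminus K^{M_0}| = |K^{M_1}|$, hence $|K^{M_0'}| = |K^{M_1}|$. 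Now $M_0'$ and $M_1$ have matching $K$-size and Lemma \ref{universal} jointly embeds them into the standard model on $K^{M_1}$, which in turn provides a joint embedding of the original $M_0$ and $M_1$.

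There is no real obstacle here: all the heavy lifting has been done in Lemmas \ref{extension}, \ref{universal}, and \ref{extendible}. The only slightly subtle point is justifying that the $K$-sort of a model of total size $\kappa \geq m(\chi)$ must itself be large enough, and then ensuring that after extending $M_0$ we still lie in $\K^\chi$ (which is guaranteed by Lemma \ref{extension} itself). Everything else amounts to combining the previously established structural lemmas.
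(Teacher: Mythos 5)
Your proof is correct and follows essentially the same route as the paper: observe $|K^{M_i}|\ge m(\chi)$ since measurables are strong limits, handle equal $K$-sizes by Lemma \ref{universal}, and handle unequal $K$-sizes via $K$-extendibility from Lemma \ref{extendible}. The only cosmetic difference is that you unpack the argument of Lemma \ref{jointembed} (via Lemma \ref{extension}(2) and Lemma \ref{universal}) rather than citing it directly, which the paper does.
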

\begin{proof} As we noted, if $M\in \K^\chi_\kappa$, then $|K^M|\ge m(\chi)$. The statement follows 
from Lemma \ref{universal} in the case $|K^{M_0}| = |K^{M_1}|$, or otherwise from Lemmas \ref{jointembed} and \ref{extendible}. 
\end{proof}

We can also show that joint embedding holds at strong limit cardinals.

\begin{theorem}\label{stronglimit} 
If $\kappa \geq \chi$ is a strong limit cardinal, then $\K^\chi$ satisfies $JEP(\kappa)$.
\end{theorem}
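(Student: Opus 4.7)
The plan is to exploit the strong-limit hypothesis to force $|K^M|=\kappa$ for every $M \in \K^\chi_\kappa$, which reduces the joint-embedding problem to the universal model supplied by Lemma~\ref{universal}.

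First I would establish the key dichotomy: for $\chi < \kappa$ strong limit, every $M \in \K^\chi_\kappa$ satisfies $|K^M|=\kappa$. This uses the size bounds from Observation~\ref{bounds}: $|P^M|\le 2^{|K^M|}$ and $|Q^M|\le|P^M|^\chi\le 2^{|K^M|\cdot\chi}$. If one supposes $|K^M|<\kappa$, then $|K^M|\cdot\chi<\kappa$, and since $\kappa$ is a strong limit, $2^{|K^M|\cdot\chi}<\kappa$; adding the three sorts, $|M|<\kappa$, contradicting $M \in \K^\chi_\kappa$.

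Given this, for arbitrary $M_0,M_1 \in \K^\chi_\kappa$ both have $|K^{M_0}|=|K^{M_1}|=\kappa$, so Lemma~\ref{universal} provides a single universal $N \in \K^\chi$ of type $(2^\kappa,\kappa)$ together with $\subm^\chi$-embeddings $M_0 \to N$ and $M_1 \to N$; this witnesses JEP$(\kappa)$. Note that we never need to appeal to $K$-extendibility or Lemma~\ref{jointembed}, because the ``unequal $K$-size'' case of that lemma is simply vacuous here.

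The main (and essentially only) obstacle is the boundary case $\kappa=\chi$: the product $|K^M|\cdot\chi$ can equal $\chi=\kappa$ even when $|K^M|<\chi$, so the cardinal-arithmetic argument no longer forces $|K^M|=\kappa$ and one must contend with possible $K$-maximal models of small $K$-sort. This is handled by a separate direct argument, or by restricting to the regime $\kappa>\chi$ that is anyway what the paper needs (strong limits remain cofinal in $m(\chi)$). Modulo this caveat, the strong-limit case is structurally much easier than the measurable case treated in Theorem~\ref{abovemeasurable}, since no ultrafilter construction is invoked at all.
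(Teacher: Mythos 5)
This is essentially the paper's own proof: the paper likewise uses Observation \ref{bounds} plus the strong-limit hypothesis to conclude $|K^M|=\kappa$ for every $M\in\K^\chi_\kappa$, and then jointly embeds $M_0,M_1$ into the universal model of Lemma \ref{universal}; the paper's extra step of passing to a $\subm^\chi$-substructure of size $\kappa$ is not required by the definition of $JEP(\kappa)$, so omitting it is harmless. Your caveat about $\kappa=\chi$ is well taken: since $|Q^M|\le|P^M|^\chi$ can be as large as $2^\chi$, a model of size exactly $\chi$ may have $|K^M|<\chi$, and the paper's proof makes the same cardinal-arithmetic inference without comment, merely remarking afterwards that the theorem holds ``even for $\kappa=\chi$.'' In fact that boundary case looks genuinely problematic rather than merely in need of the unspecified ``separate direct argument'' you defer to: for instance the structure with $K^M$ a singleton, $P^M=\{\zero,\one\}$, and $Q^M$ any jointly extensional family of $\chi$ many $\chi$-sequences from $P^M$ lies in $\K^\chi_\chi$, is not $K$-extendible (any $\subm^\chi$-extension must agree on the finite set $\widehat{\one}^M$, and $\one$ is a constant), so by Lemma \ref{jointembed} it cannot be jointly embedded with a model of type $(\chi,\chi)$. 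Thus restricting to $\kappa>\chi$, as you propose, is exactly the regime in which both your argument and the paper's are valid, and it suffices for the paper's intended applications since strong limits remain cofinal below $m(\chi)$.
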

\begin{proof}
The strong limit assumption implies that if $M\in \K^\chi_\kappa$, then $|K^M|=\kappa$ by Observation \ref{bounds}.  To prove 
$JEP(\kappa)$, let $M_0,M_1\in\K^\chi$ both of size $\kappa$. It follows that $|K^{M_0}|=|K^{M_1}|=\kappa$, so they are both 
jointly embeddable into the universal model $N$ given by Lemma 
\ref{universal}.  Since $\K^\chi$ is an AEC (Proposition \ref{aec}), there is $N'\subm^\chi N$ of size $\kappa$ that contains the 
images of 
$M_0,M_1$. This proves $JEP(\kappa$). 
\end{proof}
Notice that Theorem \ref{stronglimit} holds true even for $\kappa=\chi$. 

Suppose that there is a measurable above $\chi$ and let $\mu =m(\chi)$.  Then Theorem \ref{stronglimit} yields a cofinal sequence 
in $\mu$ on which joint embedding holds, while Theorem \ref{failure} yields a cofinal sequence on which JEP fails. Under GCH this 
gives a complete 
characterization of the JEP-spectrum of $\K$.

\begin{corollary} Assume GCH holds. $\K^\chi$ satisfies $JEP(\kappa$) if and only if 
$\kappa\geq \chi$ is a limit cardinal below $m(\chi)$ or $\kappa\ge m(\chi)$.
\end{corollary}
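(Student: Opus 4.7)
The plan is to combine Theorem \ref{stronglimit}, Theorem \ref{abovemeasurable}, and Theorem \ref{failure}, using GCH as the bridge between the ``limit vs.\ strong limit'' and ``successor vs.\ $2^\lambda$'' dichotomies. The key set-theoretic observation I would invoke is that, under GCH, $2^\lambda = \lambda^+$ for every infinite cardinal $\lambda$; consequently (i) every limit cardinal $\kappa$ is automatically a strong limit, since $\lambda < \kappa$ forces $2^\lambda = \lambda^+ \leq \kappa$ with strict inequality when $\kappa$ is a limit, and (ii) every successor cardinal $\lambda^+$ can be written as $2^\lambda$.

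For the ``if'' direction I would split into two cases. If $\kappa \geq m(\chi)$, Theorem \ref{abovemeasurable} gives $JEP(\kappa)$ immediately. If $\chi \leq \kappa < m(\chi)$ is a limit cardinal, observation (i) makes $\kappa$ a strong limit, and Theorem \ref{stronglimit} yields $JEP(\kappa)$. (The remark immediately following Theorem \ref{stronglimit} covers the boundary case $\kappa = \chi$ if $\chi$ happens to be a successor cardinal.)

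For the ``only if'' direction I would argue contrapositively. Suppose $\chi < \kappa < m(\chi)$ is a successor cardinal, $\kappa = \lambda^+$. Since $m(\chi)$ is measurable and hence a strong limit (so, in particular, a limit), $\lambda < m(\chi)$ as well; and in the intended case $\chi$ is a limit (e.g.\ $\chi = \aleph_0$), so $\kappa > \chi$ forces $\lambda \geq \chi$. By observation (ii), $\kappa = 2^\lambda$, so Theorem \ref{failure} applied to this $\lambda$ shows that $\K^\chi$ fails $JEP(\kappa)$.

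I do not expect any substantive obstacle here: the whole argument is bookkeeping once one sees the GCH trick collapsing ``strong limit'' to ``limit'' and realizing every successor as a power of $2$. The only delicate point is confirming that the parameter ranges match up on both sides: on the positive side that limits below $m(\chi)$ are strong limits, and on the negative side that $\lambda \geq \chi$ whenever $\lambda^+ > \chi$ with $\chi$ a limit. Both checks are routine.
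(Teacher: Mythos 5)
Your decomposition is the same one the paper intends: GCH turns limit cardinals into strong limits (so Theorems \ref{stronglimit} and \ref{abovemeasurable} give the positive direction) and writes every successor as a power of two (so Theorem \ref{failure} gives the negative direction). For $\chi$ a limit cardinal, and in particular for the paper's main case $\chi=\aleph_0$, your argument is complete and correct (your worry about needing $\chi$ to be a limit to get $\lambda\geq\chi$ is unnecessary: $\lambda^+>\chi$ already forces $\lambda\geq\chi$ for any infinite $\chi$; and the omitted case $\kappa<\chi$ is trivial since $\K^\chi_\kappa=\emptyset$).

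There is, however, a genuine gap at the boundary case $\kappa=\chi$ when $\chi$ is itself a successor cardinal, say $\chi=\lambda^+$. The corollary then asserts that $JEP(\chi)$ \emph{fails}, but your contrapositive argument cannot reach it: Theorem \ref{failure} requires $\chi\leq\lambda<m(\chi)$, and here $\lambda<\chi$. Worse, your parenthetical that the remark after Theorem \ref{stronglimit} ``covers'' this case points in the wrong direction — that remark concerns $JEP$ \emph{holding} at $\kappa=\chi$ (when $\chi$ is a strong limit), whereas what you need here is failure; if the remark gave $JEP(\chi)$ unconditionally it would contradict the corollary. To close the gap, argue as in Lemma \ref{nonextendible0} but with a small $K$-sort: under GCH, $\chi=2^\lambda$, so take $M$ with $K^M=\lambda$, $P^M=\cP(\lambda)$, and $Q^M$ containing a length-$\chi$ enumeration $\left\langle \lambda\setminus\{x_\alpha\}\mid\alpha<\chi\right\rangle$ of all co-singletons of $\lambda$ (with repetitions), whose $\cap$-value is $\emptyset$. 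Any non-principal ultrafilter on $\cP(\lambda)$ contains every co-singleton but not $\emptyset$, so it is not $Q^M$-complete; by Lemma \ref{extension}, $M$ is not $K$-extendible, and $M$ has type $(\chi,\lambda)$ with $\lambda<\chi\leq 2^\lambda$, so Lemma \ref{failure2} gives the failure of $JEP(\chi)$. With that supplement (or with the explicit hypothesis that $\chi$ is a limit cardinal) your proof is complete.
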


\begin{corollary} 
The Hanf number $\mu_{JEP}(\aleph_0)$ is at least a measurable cardinal. 
\end{corollary}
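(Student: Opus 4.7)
The plan is to specialize all results of this section to $\chi = \aleph_0$ and apply them to the AEC $\K^{\aleph_0}$, which has $\LS(\K^{\aleph_0}) = \aleph_0$ by Proposition \ref{aec}. The two inputs already in hand are Theorem \ref{stronglimit}, which gives $JEP(\kappa)$ for every strong limit $\kappa \geq \aleph_0$, and Theorem \ref{failure}, which gives that $JEP(2^\kappa)$ fails whenever $\aleph_0 \leq \kappa < m(\aleph_0)$. Since a measurable cardinal is inaccessible and in particular a strong limit, both the strong limit cardinals below $m(\aleph_0)$ and the cardinals of the form $2^\kappa$ with $\aleph_0 \leq \kappa < m(\aleph_0)$ form cofinal sequences in $m(\aleph_0)$.

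With this setup, the proof proceeds by contradiction. Suppose $\mu(\aleph_0) = \lambda < m(\aleph_0)$. The strong limit cardinals are unbounded below $\lambda$, so Theorem \ref{stronglimit} yields that $\K^{\aleph_0}$ satisfies JEP cofinally below $\lambda$. Definition \ref{hanfp} then forces $\K^{\aleph_0}$ to satisfy $JEP(\kappa)$ for every $\kappa \geq \lambda$. However, since $m(\aleph_0)$ is a strong limit and $\lambda < m(\aleph_0)$, one can choose $\kappa$ with $\aleph_0 \leq \kappa < m(\aleph_0)$ and $2^\kappa \geq \lambda$; Theorem \ref{failure} then provides a failure of $JEP(2^\kappa)$, contradicting the conclusion of the previous sentence.

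There is no substantive obstacle here, as the entire content has been packaged in the preceding theorems; the argument is essentially a bookkeeping juxtaposition of two cofinality statements around the strong-limit barrier at $m(\aleph_0)$. The only point that deserves care is the correct reading of Definition \ref{hanfp}, which asserts only the implication ``JEP cofinally below $\mu(\aleph_0)$'' $\Rightarrow$ ``JEP everywhere above $\mu(\aleph_0)$,'' so the contradiction must be drawn from a failure of JEP at a cardinal $\geq \lambda$, which is precisely what the inaccessibility of $m(\aleph_0)$ supplies via some $2^\kappa$ with $\kappa < m(\aleph_0)$.
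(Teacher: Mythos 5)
Your overall strategy is the paper's own (the corollary is stated there without proof, as the immediate juxtaposition of Theorem \ref{stronglimit} and Theorem \ref{failure} around $m(\aleph_0)$), but one explicit step in your write-up is false and it is exactly the step that carries the weight. You assert that for an arbitrary candidate $\lambda < m(\aleph_0)$ ``the strong limit cardinals are unbounded below $\lambda$.'' This is not true: for $\lambda = \aleph_1$, or $\lambda = \beth_{\omega+3}$, or $\lambda = \beth_\omega^{++}$, there are only boundedly many strong limit cardinals below $\lambda$. For such $\lambda$, Theorem \ref{stronglimit} does not give that $\K^{\aleph_0}$ satisfies JEP cofinally below $\lambda$; in fact by Theorem \ref{JEPspectrum} the JEP-cardinals of $\K^{\aleph_0}$ below such a $\lambda$ really are bounded, so this is not a fixable estimate but a genuine failure of the hypothesis of Definition \ref{hanfp} for this witness class. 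Consequently no contradiction is extracted for those candidate values of $\mu(\aleph_0)$, and your proof by contradiction does not go through as written.

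What the example actually certifies is the following: if $\lambda < m(\aleph_0)$ is a limit of strong limit cardinals (or the successor of a strong limit), then $\K^{\aleph_0}$ does satisfy JEP cofinally below $\lambda$, while Theorem \ref{failure} applied to any $\kappa$ with $\lambda \leq \kappa < m(\aleph_0)$ gives a failure of $JEP(2^\kappa)$ with $2^\kappa > \lambda$ (note you do need the failure strictly above $\lambda$, so choose $\kappa \geq \lambda$, as you essentially do). Such $\lambda$ are cofinal in $m(\aleph_0)$, and this cofinal family of defeated candidates is precisely the content the paper invokes when it says JEP both holds and fails on cofinal sequences below the first measurable. So either restrict your contradiction argument to those $\lambda$ and read the corollary accordingly, or, if you want to rule out every individual $\lambda < m(\aleph_0)$, you must supplement $\K^{\aleph_0}$ with further witnesses (for instance the classes of \cite{JEP} handle all candidates up to $\beth_{\omega_1}$), which neither you nor the paper does.
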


In the rest of this section we determine the joint embedding spectrum even when GCH fails, in particular for cardinals 
$\chi\le \kappa <m(\chi)$ such that $\kappa < 2^{<\kappa}$.  Our results shows that $JEP(\kappa)$ fails 
for almost all such cardinals. Depending on cardinal arithmetic there might be some cardinals for which our method does not give an answer. The precise statement is given in Theorem \ref{JEPspectrum}. 

Our abstract tool is the following lemma, which reduces our problem to finding a particular model.

\begin{lemma}\label{failure2} 
Suppose that $\kappa \geq \chi$ and $\lambda < \kappa \le 2^\lambda$.  If $\K^\chi$ contains a model of type $(\kappa, \lambda)$ 
that is not $K$-extendible, then $\K^\chi$ fails $JEP(\kappa)$. 
\end{lemma}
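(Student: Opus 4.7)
The plan is direct: reduce to Lemma \ref{jointembed} by producing, alongside the given non-$K$-extendible model, a second model in $\K^\chi_\kappa$ whose $K$-sort has strictly larger cardinality. Let $M_0$ denote the hypothesized model of type $(\kappa, \lambda)$ that is not $K$-extendible. I need a companion $M_1 \in \K^\chi_\kappa$ with $|K^{M_1}| > \lambda$; the cleanest target is $|K^{M_1}| = \kappa$, which makes the cardinality comparison visible immediately.

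For $M_1$, I construct a minimal-sized model on $K$-sort of size $\kappa$. Concretely, set $K^{M_1} = \kappa$, let $P^{M_1}$ be the Boolean algebra of finite and cofinite subsets of $\kappa$ (a subalgebra of $\cP(\kappa)$ that contains all singletons, as required by clause (4) of $\psi^\chi$), and let $Q^{M_1}$ consist of the constant $\chi$-sequences drawn from $P^{M_1}$, with $\pi_\alpha$ projecting to the constant value and $\cap$ returning that same value. The infinitary axioms (5) and (6) are then trivially satisfied. Every sort has size $\kappa$, so $|M_1| = \kappa$, giving $M_1 \in \K^\chi_\kappa$ with $|K^{M_1}| = \kappa > \lambda = |K^{M_0}|$.

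Now Lemma \ref{jointembed} applies directly: since $|K^{M_0}| < |K^{M_1}|$, the models $M_0$ and $M_1$ admit a joint embedding iff $M_0$ is $K$-extendible. By hypothesis, $M_0$ is not $K$-extendible, so no such joint embedding exists, and therefore $JEP(\kappa)$ fails in $\K^\chi$. There is essentially no obstacle in this proof: the lemma is a one-step consequence of Lemma \ref{jointembed} once the auxiliary model $M_1$ is in hand. The hypothesis $\kappa \leq 2^\lambda$ plays no role in the reduction itself; it is a consistency condition matching the bounds of Observation \ref{bounds} that ensures a model of type $(\kappa, \lambda)$ can actually exist, and it will be the relevant constraint when later results produce the non-$K$-extendible $M_0$ whose existence this lemma takes as input.
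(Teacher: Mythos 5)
Your proof is correct and follows essentially the same route as the paper: the paper likewise pairs the non-$K$-extendible model of type $(\kappa,\lambda)$ with a model of type $(\kappa,\kappa)$ and invokes Lemma \ref{jointembed} to conclude that $JEP(\kappa)$ fails. The only difference is cosmetic --- you explicitly build the companion model from finite/cofinite sets and constant sequences (much as the paper does in Theorem \ref{failure}), whereas the paper's proof of this lemma simply asserts the existence of a $(\kappa,\kappa)$-type model; your remark about the role of $\kappa\le 2^\lambda$ also matches the paper's usage.
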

\begin{proof} Let $M$ be a model as in the assumption and let $N$ be a model of type $(\kappa,\kappa)$. Then $\|M\|=\|N\|=\kappa$ 
and $|K^N|>|K^M|$.  By Lemma \ref{jointembed}, $M$ and $N$ cannot be jointly embedded, so $JEP(\kappa)$ fails.
\end{proof}

When $\kappa = \chi$, such a model is easy to build.

\begin{lemma}\label{nonextendible0} 
There exists a model $M\in \K^{\chi}$ of type $(\chi, \chi)$ which is not $K$-extendible. 
\end{lemma}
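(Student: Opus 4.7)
The plan is to invoke Lemma \ref{extension} and construct $M$ of type $(\chi,\chi)$ so that $P^M$ admits no non-principal $Q^M$-complete ultrafilter. The obstruction to such an ultrafilter will come from $Q^M$ containing, for each point $x \in K^M$, a witness sequence whose projections are all ``large'' but whose intersection is $\{x\}$. Since any non-principal ultrafilter on $P^M$ must miss every singleton, it cannot be $Q^M$-complete.

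Concretely, I would take $K^M := \chi$, and let $P^M$ be the Boolean algebra of all finite and cofinite subsets of $\chi$, with $\e^M$ interpreted as the genuine membership relation; note $|P^M|=\chi$ and every singleton belongs to $P^M$, so conditions (1)--(4) of $\psi^\chi$ are immediate. For $Q^M$, fix for each $x\in\chi$ a bijection $g_x:\chi\to \chi\setminus\{x\}$, and introduce a new symbol $A_x$ with $\pi_\alpha^M(A_x) := \chi\setminus\{g_x(\alpha)\}$ and $\cap^M(A_x) := \{x\}$. Set $Q^M := \{A_x : x\in\chi\}$. Then $|Q^M| = \chi$, so $\|M\| = \chi$, and the $A_x$ are pairwise distinct (their intersections differ), so (5) holds; (6) holds because $\bigcap_{\alpha<\chi}\bigl(\chi\setminus\{g_x(\alpha)\}\bigr) = \chi\setminus(\chi\setminus\{x\}) = \{x\} = \cap^M(A_x)$. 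Hence $M\in \K^\chi$ is of type $(\chi,\chi)$.

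To see $M$ is not $K$-extendible, I apply Lemma \ref{extension} and suppose toward a contradiction that $U$ is a non-principal $Q^M$-complete ultrafilter on $P^M$. Because $\{x\}\in P^M$ for every $x\in\chi$ and $U$ is non-principal, $\{x\}\notin U$ and so $\chi\setminus\{x\}\in U$ for every $x$. In particular, $\pi_\alpha^M(A_x) = \chi\setminus\{g_x(\alpha)\}\in U$ for every $\alpha<\chi$. By $Q^M$-completeness this forces $\cap^M(A_x)=\{x\}\in U$, contradicting non-principality. Thus no such $U$ exists, and $M$ is not $K$-extendible.

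The main obstacle, which the construction is engineered to overcome, is producing enough elements of $Q^M$ to defeat every non-principal ultrafilter while keeping $|M|=\chi$; using a single witness sequence per point of $K^M$ (costing only $\chi$-many elements of $Q$) suffices precisely because non-principality of an ultrafilter on a Boolean algebra containing all singletons already excludes every singleton from $U$, which is exactly what the targeted intersection $\{x\}$ detects.
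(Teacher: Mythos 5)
Your construction is correct and it implements the same high-level strategy as the paper (take $K^M=\chi$, let $P^M$ be the finite/cofinite algebra, and use the $Q$-sort to block every non-principal $Q^M$-complete ultrafilter, then quote Lemma \ref{extension}), but the witnesses you put in $Q^M$ are genuinely different. The paper's $Q^M$ consists of sequences of tails $\left\langle [\alpha,\chi)\mid\alpha<\beta\right\rangle$ for limit $\beta\le\chi$, and the contradiction is reached by a transfinite induction showing every tail lies in $U$ (successor steps from non-principality, limit steps from $Q^M$-completeness), after which completeness forces $\emptyset\in U$. You instead attach to each point $x$ a single sequence $A_x$ of co-singletons $\chi\setminus\{g_x(\alpha)\}$ with $\cap^M(A_x)=\{x\}$; since a non-principal ultrafilter on an algebra containing all singletons omits every singleton, all projections of $A_x$ lie in $U$ at once and completeness puts $\{x\}$ into $U$ — no induction needed, at the cost of fixing $\chi$-many bijections. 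Both versions use $\chi$-many elements of $Q$ and give a model of type $(\chi,\chi)$. One small point in your write-up: for clause (5) the relevant fact is that distinct $A_x,A_y$ have distinct projection \emph{sequences}, not merely distinct intersections; this does hold in your construction (the ranges of $g_x$ and $g_y$ differ, so some $\alpha$ has $g_y(\alpha)=x\neq g_x(\alpha)$), or alternatively it follows from your verification of (6) together with extensionality of genuine membership, but the parenthetical ``their intersections differ'' should be expanded to one of these arguments.
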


\begin{proof} 
Define $M$ by setting $K^M=\chi$; $P^M$ is the Boolean Algebra that is generated by all finite subsets of $\chi$ plus all tails of the form $[\alpha,\chi)$, with $\alpha <\chi$; and $Q^M$ contains for every limit 
ordinal $\beta\le\chi$ the sequence of tails $\left\langle [\alpha,\chi)\mid \alpha < \beta\right\rangle$.  

A couple of remarks are at hand before we proceed:
\begin{enumerate}
 \item We do not impose any completeness requirements on  $P^M$. This means that the size of $P^M$ is equal to $\chi$. 
 \item If $\beta$ is a limit ordinal less than $\chi$, then the sequence of tails in $Q^M$, $\left\langle [\alpha,\chi)\mid \alpha < \beta\right\rangle$, has length smaller than $\chi$. Since sequences in $Q^M$ must be of length $\chi$, we tacitly assume that some of the sets in this sequence repeat. 
\end{enumerate}

It 
follows that $M$ has type $(\chi, \chi)$ and $M\in\K^{\chi}$. 

We also claim that $M$ is not $K$--extendible, or, equivalently by Lemma \ref{extension}, there is no non-principal 
$Q^M$-complete ultrafilter on $P^M$. Assume otherwise and let $U$ be such an ultrafilter. We prove by induction on $\alpha$ 
that $[\alpha,\chi)\in U$. For the successor stage use the fact that $U$ is non-principal. For the limit stages use the fact 
that $U$ is $Q^M$-complete. 

Therefore, $U$ contains all the sets of the form $[\alpha,\chi)$, $\alpha < \chi$. Using $Q^M$-completeness once more, $U$  
must 
contain $\cap_{\alpha} [\alpha,\chi)=\emptyset$. Contradiction. 
\end{proof}

\begin{corollary}\label{failJEP0}
 $\K^\chi$ fails $JEP(\kappa)$ for all $\chi^+\leq \kappa \leq 2^{\chi}$. 
\end{corollary}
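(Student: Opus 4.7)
The plan is to invoke Lemma \ref{failure2} with $\lambda = \chi$: for any $\kappa$ in the given interval $[\chi^+, 2^\chi]$, Cantor's inequality gives $\lambda = \chi < \chi^+ \le \kappa \le 2^\chi = 2^\lambda$, so the arithmetic hypothesis of Lemma \ref{failure2} is satisfied. It then suffices to produce, for each such $\kappa$, a non-$K$-extendible model $M_\kappa \in \K^\chi$ of type $(\kappa, \chi)$.

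The starting point is the non-$K$-extendible model $M_0$ of type $(\chi, \chi)$ from Lemma \ref{nonextendible0}. I would thicken $M_0$ without enlarging $K^{M_0}$. Concretely, let $M_1$ be the standard model built on $\left(\chi, \cP(\chi), {}^\chi\cP(\chi)\right)$; it lies in $\K^\chi$ and has type $(2^\chi, \chi)$. Since $K^{M_0} = K^{M_1} = \chi$, one checks directly that $M_0 \subm^\chi M_1$: for every $X \in P^{M_0}$ the set $\widehat{X}^{M_0} = \widehat{X}^{M_1}$, so agreement holds on \emph{all} subsets of $K^{M_0}$, not merely the finite ones.

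To land in size exactly $\kappa$, apply the L\"owenheim--Skolem property of the AEC $\K^\chi$ (Proposition \ref{aec}) to the set $M_0 \cup X$, where $X \subseteq M_1$ has size $\kappa$. This yields $M_\kappa \in \K^\chi$ with $M_0 \subseteq M_\kappa \subm^\chi M_1$ and $|M_\kappa| = \kappa$; the AEC coherence axiom then gives $M_0 \subm^\chi M_\kappa$. Corollary \ref{extension2} applied to this extension forces $M_\kappa$ to be non-$K$-extendible, and in particular $K^{M_\kappa} = K^{M_0} = \chi$, so $M_\kappa$ has type $(\kappa, \chi)$ as required. Lemma \ref{failure2} now delivers the failure of $JEP(\kappa)$.

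I do not anticipate any serious obstacle; the preceding lemmas do all of the heavy lifting. The two points that need mild care are (a) checking that the canonical structure $M_1$ really is a $\subm^\chi$-extension of $M_0$ --- the invariance $\widehat{X}^{M_0} = \widehat{X}^{M_1}$ under extensions with unchanged $K$ is used here --- and (b) arranging the L\"owenheim--Skolem reduction so that $M_0 \subseteq M_\kappa$, which is what allows Corollary \ref{extension2} to be applied and in turn forces $K^{M_\kappa}$ to remain of size $\chi$.
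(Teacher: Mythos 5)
Your proposal is correct and has the same skeleton as the paper's (here implicit) argument: for each $\kappa\in[\chi^+,2^\chi]$ produce a non-$K$-extendible model of type $(\kappa,\chi)$ and then quote Lemma \ref{failure2} with $\lambda=\chi$. Where you diverge is the intermediate-cardinality step. The paper's way of doing this is spelled out only later, in the proof of Corollary \ref{failJEP1}: keep $K$ and $Q$ fixed and extend $P^{M_0}$ by hand to a Boolean algebra of subsets of $\chi$ of the desired size, obtaining a $\subm^\chi$-extension of type $(\kappa,\chi)$, and then apply Corollary \ref{extension2} and Lemma \ref{failure2}. You instead realize the model of Lemma \ref{nonextendible0} canonically inside the standard model on $\left(\chi,\cP(\chi),{}^\chi\cP(\chi)\right)$ (legitimate, since that model is built from actual subsets and sequences, cf.\ Observation \ref{bounds}), cut down to size exactly $\kappa$ by the L\"owenheim--Skolem axiom, and use coherence to get $M_0\subm^\chi M_\kappa$; Corollary \ref{extension2} then preserves non-$K$-extendibility, and $K^{M_\kappa}=\chi$ is in fact automatic from $K^{M_0}\subseteq K^{M_\kappa}\subseteq K^{M_1}=\chi$, so \ref{extension2} is only needed for non-extendibility. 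Both routes work: the explicit Boolean-algebra extension gives full control over the resulting structure, while your LS-plus-coherence argument is a bit more abstract and saves you from verifying by hand that the enlarged structure is again in $\K^\chi$ and a $\subm^\chi$-extension, at the cost of invoking the AEC axioms from Proposition \ref{aec}.
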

\begin{proof}
 Let $\chi^+\leq \kappa \leq 2^{\chi}$ and let $M$ be the model from Lemma \ref{nonextendible0}. Define some $N\in \K^\chi$  that extends $M$ and has size $\kappa$: $K^N=K^M$, $P^N$ is a Boolean Algebra that extends $P^M$ and has size $\kappa$, and $Q^N=Q^M$. If $N$ were $K$-extendible, then by Lemma \ref{extension}, there would be a non-principal $Q^N$-complete ultrafilter on $P^N$. The restriction of $U$ on $P^M$ contradicts Lemma \ref{nonextendible0}. 
\end{proof}

Our next goal is to extend Lemma \ref{nonextendible0} and Corollary \ref{failJEP0} to higher cardinalities. This will be 
achieved in Corollary \ref{failJEP1}. We need some preliminary work before we can prove Corollary \ref{failJEP1}. 

The proof of the following fact is standard, see, e. g., \cite[Lemma 4.2.3]{ChangKeislerModelTheory}.

\begin{fact}\label{partition}
 Let $B$ be a Boolean Algebra on $\kappa$ and let $U$ be an ultrafilter on $B$. Then $U$ is $\lambda$-complete, for some 
$\lambda\le\kappa$, if and only if for every $W\subset B$, a partition of $B$ of size $|W|<\lambda$, there exists some $w\in W$ 
that belongs to $U$.
\end{fact}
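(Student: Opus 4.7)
The plan is to prove both directions by contrapositive/contradiction, with the engine being the De Morgan duality between partitions and meets of complements.

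For the forward direction, I would assume $U$ is $\lambda$-complete and let $W \subset B$ be a partition with $|W| < \lambda$. Suppose for contradiction that no element of $W$ is in $U$. Since $U$ is an ultrafilter, $w^c \in U$ for every $w \in W$. Because $|W| < \lambda$, the $\lambda$-completeness of $U$ gives $\bigwedge_{w \in W} w^c \in U$. But $W$ is a partition of $B$, so $\bigvee_{w \in W} w = \one$, hence $\bigwedge_{w \in W} w^c = \zero \notin U$, a contradiction.

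For the backward direction, assume the partition property holds but $U$ fails $\lambda$-completeness. Then there is a family $\{x_i : i < \gamma\} \subset U$ with $\gamma < \lambda$ and $\bigwedge_i x_i \notin U$, equivalently, setting $y_i = x_i^c$, each $y_i \notin U$ but $\bigvee_i y_i \in U$. I would disjointify: put $z_0 = y_0$ and $z_\alpha = y_\alpha \wedge \bigwedge_{\beta < \alpha} y_\beta^c$ for $0 < \alpha < \gamma$, and finally adjoin $z_\infty = \bigl(\bigvee_{i<\gamma} y_i\bigr)^c$. Then $\{z_\alpha : \alpha < \gamma\} \cup \{z_\infty\}$ is pairwise disjoint with join $\one$, so it is a partition of $B$ of size strictly less than $\lambda$. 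No piece lies in $U$: $z_\alpha \leq y_\alpha$ so $z_\alpha \notin U$ since $y_\alpha \notin U$, while $z_\infty = (\bigvee y_i)^c \notin U$ since $\bigvee y_i \in U$. This contradicts the partition hypothesis.

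The main obstacle is bookkeeping in the backward direction, namely ensuring that the transfinite disjointification actually lives inside $B$: the meets $\bigwedge_{\beta < \alpha} y_\beta^c$ and the join $\bigvee y_i$ must be elements of $B$ for the construction to yield a genuine partition of $B$. In the intended application the ambient algebra is a concrete algebra of subsets of some $K^M$, where all these meets and joins can be computed pointwise and returned to $B$ via the ultrafilter class (or one works inside a suitable $\lambda$-completion), so this is a routine rather than a conceptual difficulty; the conceptual content is entirely the De Morgan dualization in the first direction and the disjointification in the second.
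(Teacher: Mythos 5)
Your proof is correct and is precisely the standard argument that the paper delegates to its citation of Chang--Keisler: the paper gives no proof of Fact \ref{partition} itself, and your De Morgan step for one direction plus transfinite disjointification for the other is exactly the classical proof for ultrafilters over a set. The one thing to repair is your resolution of the obstacle you flag: for a proper subalgebra $B \subset \cP(\kappa)$, the meets $\bigwedge_{\beta<\alpha} y_\beta^{c}$ and the join $\bigvee_{i<\gamma} y_i$ cannot simply be ``computed pointwise and returned to $B$,'' and passing to a $\lambda$-completion does not help either, since the partition you produce must consist of elements of $B$ itself; for a general subalgebra the two sides of the equivalence can in fact come apart under the usual reading of $\lambda$-completeness (e.g.\ the finite--cofinite algebra on $\omega_1$ with the Fr\'echet ultrafilter and $\lambda=\omega_1$). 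What makes your construction legitimate in the only place the paper uses the Fact (Lemma \ref{weaklycompact}) is that there $B$ is $\kappa$-complete and $\lambda=\mu^{+}\leq\kappa$, so every meet and join of fewer than $\lambda$ elements appearing in your disjointification already lies in $B$; with that hypothesis made explicit, your argument goes through verbatim.
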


Recall that a cardinal $\kappa$ is weakly compact if and only if for every $\kappa$-complete Boolean Algebra 
$B\subset P(\kappa)$ generated by $\kappa$-many subsets, there is a $\kappa$-complete non-principal ultrafilter on $B$. We will 
consider a weakening of this large cardinal notion as in \cite[Definition 2.1]{BoneyUnger}.

\begin{definition}
 A cardinal $\kappa$ is \emph{$\delta$-weakly compact} for $\delta\le\kappa$ iff every $\kappa$-complete Boolean Algebra 
$B\subset 
P(\kappa)$ generated by $\kappa$-many subsets has a $\delta$-complete non-principal ultrafilter on $B$. 
\end{definition}

Note that $\kappa$-weakly compact is the same as weakly compact. 

\begin{lemma}\label{weaklycompact}
Fix an infinite cardinal $\chi$. If $\chi < \kappa \leq m(\chi)$ and $\kappa$ is $\chi^+$-weakly 
compact, then $\kappa$ is weakly compact. 
\end{lemma}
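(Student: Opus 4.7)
The plan is to start from a $\kappa$-complete Boolean subalgebra $B\subseteq P(\kappa)$ generated by $\kappa$-many subsets (this is the class of algebras on which we must produce a $\kappa$-complete non-principal ultrafilter). By $\chi^+$-weak compactness of $\kappa$, pick any $\chi^+$-complete non-principal ultrafilter $U$ on $B$, and let $\delta$ be the completeness of $U$ (so $U$ is $\delta$-complete but not $\delta^+$-complete). Then $\chi^+\leq \delta$, and it suffices to show $\delta\geq \kappa$, for this forces $U$ itself to be the desired $\kappa$-complete non-principal ultrafilter on $B$. Suppose toward a contradiction that $\delta<\kappa$.

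By Fact \ref{partition} applied to the failure of $\delta^+$-completeness, fix a partition $\{w_\alpha:\alpha<\delta\}\subseteq B$ of $\one=\kappa$ with $w_\alpha\notin U$ for every $\alpha<\delta$. Since $B\subseteq P(\kappa)$ is $\kappa$-complete and $\delta<\kappa$, the unions $W_X:=\bigcup_{\alpha\in X}w_\alpha$ lie in $B$ for every $X\subseteq\delta$, and these are literal set-theoretic unions (so the usual identities $W_X\cap W_Y=W_{X\cap Y}$, $W_{\delta\setminus X}=\kappa\setminus W_X$ hold). Define
$$U^*:=\{X\subseteq\delta \mid W_X\in U\}.$$
I then verify that $U^*$ is a non-principal ultrafilter on $\delta$ (ultrafilter: from $W_X\vee W_{\delta\setminus X}=\one$ and the ultrafilter property of $U$; non-principal: no $w_\alpha$ belongs to $U$) and that it is $\delta$-complete: given any partition $\{X_i:i<\gamma\}$ of $\delta$ with $\gamma<\delta$, the sets $\{W_{X_i}:i<\gamma\}$ form a partition of $\one$ in $B$ of size less than $\delta$, so by $\delta$-completeness of $U$ and Fact \ref{partition} some $W_{X_i}\in U$, i.e.\ some $X_i\in U^*$; applying Fact \ref{partition} in the other direction yields the $\delta$-completeness of $U^*$.

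Consequently $\delta$ carries a $\delta$-complete non-principal ultrafilter, hence $\delta$ is a measurable cardinal with $\chi<\chi^+\leq\delta<\kappa\leq m(\chi)$, contradicting the minimality of $m(\chi)$. Therefore $\delta\geq\kappa$, completing the proof. The most delicate step is the middle paragraph: the construction of $U^*$ relies crucially on reading the joins in $B$ as genuine set-theoretic unions inside $P(\kappa)$, so that the Boolean identities needed to verify the ultrafilter and completeness properties of $U^*$ go through without having to invoke any infinite distributive law in the abstract algebra $B$.
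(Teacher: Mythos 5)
Your proof is correct and is essentially the paper's argument: both take a $\chi^+$-complete non-principal ultrafilter $U$ on $B$, locate the least failure of completeness at some $\delta^+$ with $\chi^+\le\delta<\kappa$, and project $U$ along the witnessing partition $\{w_\alpha\mid\alpha<\delta\}$ (your $X\mapsto W_X=\bigcup_{\alpha\in X}w_\alpha$ is exactly the paper's $Y\mapsto f^{-1}(Y)$ for $f(x)=\alpha$ iff $x\in w_\alpha$, using $\kappa$-completeness of $B$) to obtain a $\delta$-complete non-principal ultrafilter on $\delta$, contradicting $\chi<\delta<\kappa\le m(\chi)$. The only cosmetic difference is that you phrase it as showing the given $U$ is automatically $\kappa$-complete rather than as a contradiction with the non-existence of a $\kappa$-complete ultrafilter, and you make explicit the point (also implicit in the paper) that joins of fewer than $\kappa$ elements of $B$ are genuine set-theoretic unions.
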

\begin{proof}
 Assume otherwise. That is there exists some $\kappa$-complete \comment{$m(\chi)$-complete } Boolean Algebra $B\subset P(\kappa)$ 
generated by $\kappa$-many subsets such that there is no $\kappa$-complete non-principal ultrafilter on $B$, but there is a 
$\chi^+$-complete, non-principal ultrafilter $U$ on $B$.  Let $\mu^+$ be the least cardinal so $U$ is not $\mu^+$-complete.  Note 
that we must have $\chi^+ \leq \mu < \kappa$.

Since $U$ is not $\mu^+$-complete, by Fact \ref{partition}, there exists some partition $W=\{w_i\mid i<\mu\}$ of $B$ such that 
$w_i\notin U$ for all $i < \mu$. Define a function $f:\kappa\rightarrow \mu$ by 
\[f(x)=i \text{ if $x\in w_i$}\]

The function $f$ is defined on $\kappa$ and is onto $\mu$. Use $f$ to define a complete Boolean Algebra $C\subset 
\cP(\mu)$ and some ultrafilter $V$ on $C$ as follows: $Y\in C$ if and only if $f^{-1}(Y)\in B$, and $Y\in V$ if and only if 
$f^{-1}(Y)\in U$. 

It is routine to verify that $C = \cP(\mu)$ and that $V$ is a $\mu$-complete, non-principal ultrafilter on $C$.  Thus, $\mu$ must 
be measurable, contradicting the assumption that $\chi <\mu < \kappa \leq m(\chi)$.
\end{proof}

We introduce two functions on cardinals, one standard and one not.
\begin{definition}\begin{enumerate} 
    \item For any cardinal $\kappa$, 
\begin{itemize}
 \item $\beth_0(\kappa)=\kappa$
 \item $\beth_{\alpha+1}(\kappa)=2^{\beth_\alpha(\kappa)}$
 \item $\beth_\lambda(\kappa)=\sup_{\alpha<\lambda} \beth_\alpha(\kappa)$, for $\lambda$ limit 
\end{itemize}
If no $\kappa$ is mentioned, we assume that $\kappa=\aleph_0$. 
    \item  For any cardinal $\kappa$,
    $$A(\kappa) = \begin{cases} \kappa^{<\kappa} & \text{if $\kappa$ is regular}\\
    2^\kappa & \text{if $\kappa$ is singular}\end{cases}$$
\end{enumerate}
\end{definition}

\begin{fact}\label{bethfact}\
 \begin{enumerate}[(a)]
  \item All strong limit cardinals are of the form $\beth_\lambda$ for some limit $\lambda$ and, given any $\kappa$, the least strong limit above $\kappa$ is $\beth_\omega(\kappa)$.
  \item If $B$ is a $\kappa$-complete Boolean algebra on $\cP(\kappa)$ that contains the finite subsets of $\kappa$, then $B$ must be of size $A(\kappa)$.  Recall that we assume the Boolean algebras we deal with have all finite subsets of $\kappa$.
 \end{enumerate}

\end{fact}

\begin{corollary}\label{failJEP1} Assume $\chi \leq \kappa<m(\chi)$. 
If $\kappa$ is not weakly compact, then $JEP(\lambda)$ fails for all $\lambda$ satisfying
$$\max\{\kappa^{+},A(\kappa)\}\le\lambda\le 
2^{\kappa}$$
\end{corollary}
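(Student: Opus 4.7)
My plan is to apply Lemma \ref{failure2}: for each $\lambda$ with $\max\{\kappa^+,\kappa^{<\kappa}\}\le\lambda\le 2^\kappa$ it suffices to produce a model $M\in\K^\chi$ of type $(\lambda,\kappa)$ that is not $K$-extendible, since $\kappa<\lambda\le 2^\kappa$ then lets the lemma conclude that $JEP(\lambda)$ fails. The case $\chi=\kappa$ is already subsumed by Corollary \ref{failJEP0}, because $\chi^{<\chi}\le 2^\chi$ forces $[\max\{\kappa^+,\kappa^{<\kappa}\},2^\kappa]\subseteq [\chi^+,2^\chi]$, so I may assume $\chi<\kappa$.

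The contrapositive of Lemma \ref{weaklycompact} now applies: since $\chi<\kappa<m(\chi)$ and $\kappa$ is not weakly compact, $\kappa$ is not $\chi^+$-weakly compact. Fix a witness $B\subseteq\cP(\kappa)$: a $\kappa$-complete Boolean algebra generated by $\kappa$-many subsets which admits no non-principal $\chi^+$-complete ultrafilter. The discussion preceding Lemma \ref{weaklycompact} gives $|B|=\kappa^{<\kappa}$, and by the standing convention $B$ contains every finite subset of $\kappa$, in particular every singleton.

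Now build $M$: set $K^M=\kappa$; choose $P^M\subseteq\cP(\kappa)$ to be any Boolean subalgebra extending $B$ with $|P^M|=\lambda$, which is possible because $\kappa^{<\kappa}\le\lambda\le 2^\kappa$; and let $Q^M$ consist of all $\chi$-sequences of elements of $B$, with $\cap$ interpreted as the genuine intersection (which lies in $B\subseteq P^M$ by $\kappa$-completeness of $B$, using $\chi<\kappa$). Interpret the remaining symbols canonically inside $\cP(\kappa)$. Verifying that $M\models\psi^\chi$ is routine, and $|Q^M|\le (\kappa^{<\kappa})^\chi=\kappa^{<\kappa}\le\lambda$, so $M$ has type $(\lambda,\kappa)$.

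Finally, I show $M$ is not $K$-extendible, for which Lemma \ref{extension} reduces the task to ruling out a non-principal $Q^M$-complete ultrafilter on $P^M$. Given such a $U$, the restriction $U\cap B$ is an ultrafilter on $B$; it is non-principal because $U$ contains no singleton and every singleton lies in $B$; and it is $\chi^+$-complete because any $\chi$-sequence of elements of $U\cap B$ lies in $Q^M$, so $Q^M$-completeness of $U$ forces its intersection (already a member of $B$) into $U\cap B$. This contradicts the choice of $B$. The one point requiring care is that enlarging $P^M$ beyond $B$ to reach size $\lambda$ might appear to introduce new $Q^M$-complete ultrafilters, but since $Q^M$ only references $B$-sequences the restriction argument goes through uniformly, regardless of how $P^M$ is padded.
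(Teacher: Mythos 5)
Your proposal is correct and follows essentially the same route as the paper's proof: the same witness algebra $B$ extracted from (the contrapositive of) Lemma \ref{weaklycompact}, the same models with $K^M=\kappa$, $P^M$ a Boolean algebra of size $\lambda$ extending $B$, and $Q^M=B^\chi$, the same restriction-of-ultrafilter argument for non-$K$-extendibility, and the same appeal to Lemma \ref{failure2}; the only differences are cosmetic, namely that you build the $\lambda$-sized model in one step where the paper first builds the $(\kappa^{<\kappa},\kappa)$-model and then pads $P$ and invokes Corollary \ref{extension2}, and that you handle the case $\chi=\kappa$ explicitly via Corollary \ref{failJEP0}, which the paper leaves implicit even though Lemma \ref{weaklycompact} is stated only for $\chi<\kappa$. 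The cardinal computation $(\kappa^{<\kappa})^\chi=\kappa^{<\kappa}$ that you use to bound $|Q^M|$ is exactly what the paper also implicitly assumes when it asserts its model has type $(\kappa^{<\kappa},\kappa)$, so on that point your write-up is no less rigorous than the original.
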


Note that $2^\kappa$ always satisfies this inequality, but it can be the \emph{only} cardinal there if case $\kappa$ is singular or GCH holds at $\kappa$.

\begin{proof}
 By Lemma \ref{weaklycompact}, $\kappa$ is not $\chi^+$-weakly compact. By definition there is a Boolean Algebra $B$ on 
$\kappa$ that admits no non-principal $\chi^+$-complete ultrafilter.  The size of $B$ is $A(\kappa)$ by Fact \ref{bethfact}. 
Then use $B$ to construct a model $M\in \K^\chi$ of type $\left(A(\kappa),\kappa\right)$ that is not $K$-extendible by defining 
$K^M=\kappa$, $P^M=B$ and $Q^M=B^\chi$. By Lemma \ref{failure2}, if $\kappa<A(\kappa)$, $\K^\chi$ 
fails $JEP(A(\kappa))$.

For $\lambda$ any cardinal in the interval $(A(\kappa), 2^\kappa]$ (if any), we work similarly. Construct a model 
$N\in \K^\chi$ of type $(\lambda,\kappa)$ such that $M\prec^\chi_{\K} N$. Define $N$ by letting 
$K^N=K^M=\kappa$, $P^N$ is a Boolean Algebra extension of $P^M=B$ that has size $\lambda$ and $Q^N=Q^M=B^\chi$. By Corollary 
\ref{extension2}, $N$ is not $K$-extendible and by Lemma \ref{failure2} again, $\K^\chi$ fails $JEP(\lambda)$.
\end{proof}

We can strengthen this failure by using induction to build many failures between $\kappa$ and the first strong limit above $\kappa$.

\begin{lemma}\label{failJEP2}
Let $\chi \leq \kappa < m(\chi)$ such that $\kappa$ is not weakly compact.  Then $JEP(\lambda)$ fails for all $\lambda$ satisfying 
$$\max\{\kappa^+, A(\kappa)\} \leq \lambda < \beth_\omega(\kappa)$$
except perhaps when 
    there is a singular limit $\mu$ such that $2^{<\mu} \le \lambda < 2^\mu$, in which case our method does not determine whether JEP$(\lambda)$ holds or not.
\end{lemma}

 The cardinal restrictions on $\lambda$ may seem strange, but are necessary from our methods.  Essentially, we will apply Corollary \ref{failJEP1} to an interval of cardinals to get failures of JEP.  When applying to successor $\lambda$, the  start of the new interval covers the first cardinal missed by the previous intervals. However, at limit cardinals, there can be a gap at the places indicated in the lemma.  Notice that under GCH this exception can not happen.  Additionally, the JEP is guaranteed to fail on the $\beth_n(\kappa)$'s.

\begin{proof}
We apply Corollary \ref{failJEP1} to all cardinals $\lambda \in \left[\kappa, \beth_\omega(\kappa)\right)$ to get failure of JEP on each interval 
$$I_\lambda = [\max\{\lambda^+, A(\lambda)\}, 2^\lambda].$$

We take the following cases:

\begin{itemize}
\item  \underline{$\lambda = \lambda_0^+$ successor}:
The start of the interval $I_\lambda$ is at worst $\left(2^{\lambda_0}\right)^+$, which is the first cardinal above $I_{\lambda_0}$. Using that $\lambda$ is regular and $A(\lambda) = \lambda^{<\lambda}$, we have that
$$\max\{\lambda^+, A(\lambda)\} \leq (2^{<\lambda})^+.$$

\item \underline{$\lambda$ limit cardinal:} Set $I_{<\lambda} = \cup_{\mu<\lambda} I_\mu$. The first cardinal not in  $I_{<\lambda}$ is either $2^{<\lambda}$ or $\left(2^{<\lambda}\right)^+$, depending on whether $I_{<\lambda}$ has supremum or a maximum respectively. We split this case into the following subcases. 

    \item \underline{$\lambda$ is regular and $2^{<\lambda} = 2^\mu$ for some $\mu < \lambda$.}\\
    In this case, the first cardinal above $I_{<\lambda}$ is $(2^{<\lambda})^+$.  By regularity, $A(\lambda) = \lambda^{<\lambda}$. Then there is no gap between $I_{<\lambda}$ and $I_\lambda$ because
    $$\max\{\lambda^+, \lambda^{<\lambda}\} \leq (2^{<\lambda})^+$$
    \item \underline{$\lambda$ is regular and $2^{<\lambda} > 2^\mu$ for all $\mu < \lambda$.}\\
    In this case, the first cardinal above $I_{<\lambda}$ is $2^{<\lambda}$. By regularity, $A(\lambda) = \lambda^{<\lambda} = 2^{<\lambda}$, so the left endpoint of $I_\lambda = \max\{\lambda^+, 2^{<\lambda}\}$.  If $2^{<\lambda} = \lambda$, then $\lambda$ would have been a strong limit. But we noted already that the first strong limit above $\kappa$ is $\beth_\omega(\kappa)>\lambda$. So,  $2^{<\lambda} > \lambda$ and there is no gap between $I_{<\lambda}$ and $I_\lambda$. 
    \item \underline{$\lambda$ is singular and $2^{<\lambda} = 2^\mu$ for some $\mu < \lambda$.}\\
    In this case, the continuum function is eventually constant below $\lambda$ and $2^\lambda=2^{<\lambda}=2^\mu$. By singularity, $A(\lambda) = 2^{\lambda}$. So $I_\lambda$ begins at (and only contains) $2^\lambda$ and $I_\lambda$ is a subset of $I_{<\lambda}$. 
    \item \underline{$\lambda$ is singular and $2^{<\lambda} > 2^\mu$ for all $\mu < \lambda$.}\\
    In this case, the first cardinal above $I_{<\lambda}$ is $2^{<\lambda}$.  By singularity, $A(\lambda) = 2^{\lambda}$. So $I_\lambda$ begins at (and only contains) $2^\lambda$, thus the gap.
    
\end{itemize}

Putting the above items together, we have the above lemma statement.
\end{proof}


Summarizing the results of this section, we have shown that the JEP spectrum of $\K^\chi$ is as desired.

\begin{theorem}\label{JEPspectrum}
Fix an infinite cardinal $\chi$.  Then there is an AEC $\K^\chi$ with $LS(\K^\chi) = \chi$ (that is given by models of a sentence in $\mathbb{L}_{\chi^+,\omega}$) whose JEP spectrum satisfies the following:
\begin{enumerate}
    \item JEP holds cofinally below $m(\chi)$;
    \item JEP fails cofinally below $m(\chi)$; and
    \item JEP holds everywhere above $m(\chi)$.
\end{enumerate}
In particular, we have
\begin{itemize}
    \item JEP holds at every strong limit and above $m(\chi)$;
    \item if $\chi \leq \kappa < m(\chi)$, then JEP fails at $2^\kappa$; and
    \item more generally, for regular $\kappa$ with $\chi \leq \kappa < m(\chi)$, JEP fails on the interval
    $$\left[ \max\{\kappa^+, \kappa^{<\kappa}\}, 2^\kappa\right]$$
\end{itemize}

\end{theorem}
\begin{proof}
The first bullet point is Theorems \ref{stronglimit} and \ref{abovemeasurable}.  The last two bullets are Theorem \ref{failure} and Lemma \ref{failJEP2}.
\end{proof}

\begin{corollary}
If GCH holds, then JEP fails in $\K^\chi$ exactly at the $\kappa$ satisfying $\chi \leq \kappa < m(\chi)$ that are not strong limit.
\end{corollary}

%
%

We finish this section by producing models of $ZFC +$ ``there is a measurable'' where Theorem \ref{JEPspectrum} 
characterizes the $JEP$ spectrum of $K^\chi$. The main tool is the following theorem of Paris and Kunen (see \cite[Theorem 
21.3]{JechsSetTheory}). 

\begin{fact}\label{ParisKunen}
Assume GCH and let $\kappa$ be a measurable cardinal. Let $D$ be a normal measure on $\kappa$ and let $A$ be a set of regular
cardinals below $\kappa$ such that $A\notin D$. Let $F$ be a function on $A$ such that
$F (\alpha) < \kappa$ for all $\alpha \in A$, and:
\begin{enumerate}[(i)]
 \item $cf F (\alpha) > \alpha$;
 \item $F (\alpha_1 ) \le F(\alpha_2)$ whenever $\alpha_1 \le \alpha_2$.
\end{enumerate}

Then there is a generic extension $V [G]$ of  $V$ with the same cardinals and
cofinalities, such that $\kappa$ is measurable in $V [G]$, and for every $\alpha\in  A$, 
$V [G] \vDash  2^\alpha = F(\alpha)$.

Moreover, the powersets of cardinals not in $A$ have the smallest possible cardinality that satisfies
$\kappa<cf(2^\kappa)$ and that the powerset function is increasing.
\end{fact}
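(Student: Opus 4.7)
The plan is to realize $F$ as the continuum function below $\kappa$ via a reverse Easton iteration and to preserve measurability by lifting the ultrapower embedding associated with $D$; conditions (i), (ii) and $A \notin D$ are each precisely what is needed to make one step of this argument go through.

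First I would set up a reverse Easton support iteration $\langle \P_\alpha, \dot{\mathbb{Q}}_\alpha : \alpha < \kappa\rangle$ of length $\kappa$, where at each regular $\alpha$ the stage forcing is $\mathrm{Add}(\alpha, F(\alpha))$ for $\alpha \in A$, and (a name for) the smallest Cohen blow-up needed to meet the final ``moreover'' clause when $\alpha \notin A$. Hypothesis (i), $\mathrm{cf}\, F(\alpha) > \alpha$, is exactly what prevents $\mathrm{Add}(\alpha, F(\alpha))$ from collapsing $F(\alpha)$; hypothesis (ii), monotonicity of $F$, together with GCH in $V$, guarantees that the iteration factors at each stage $\alpha$ as an $\alpha^+$-c.c.\ head followed by an $\alpha^+$-closed tail, so Easton's lemma preserves cardinals and cofinalities. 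A standard nice-names count in the generic extension $V[G]$ then yields $2^\alpha = F(\alpha)$ for $\alpha \in A$ and the minimum value consistent with K\"onig's inequality and monotonicity elsewhere.

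The main obstacle is lifting the ultrapower. Let $j : V \to M$ come from $D$, with $\mathrm{crit}(j) = \kappa$ and ${}^\kappa M \subseteq M$. The hypothesis $A \notin D$ is equivalent to $\kappa \notin j(A)$, so the stage-$\kappa$ factor of $j(\P)$ is trivial, and $j(\P)$ splits as $\P * \dot{\P}^{\mathrm{tail}}$ where $\dot{\P}^{\mathrm{tail}}$ is, in $M[G]$, an iteration starting strictly above $\kappa$ and therefore highly closed. Because $j \upharpoonright \P$ is the identity, any $M[G]$-generic $H$ for $\dot{\P}^{\mathrm{tail}}$ lying in $V[G]$ automatically satisfies $j[G] = G \subseteq G * H$, so no master condition is needed. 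The hard part is producing such an $H$ inside $V[G]$: using GCH and the $\kappa$-closure of $M$ one shows that $M[G]$ contains at most $j(\kappa)$-many maximal antichains of $\dot{\P}^{\mathrm{tail}}$, enumerates them in $V[G]$, and diagonalizes against them using the closure of the tail. Lifting $j$ to $\hat{\jmath} : V[G] \to M[G * H]$ then yields the normal measure $\hat{U} = \{X \subseteq \kappa : X \in V[G],\ \kappa \in \hat{\jmath}(X)\}$, witnessing that $\kappa$ remains measurable in $V[G]$.
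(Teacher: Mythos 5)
The paper does not prove this statement at all: it is quoted as a known Fact (the Kunen--Paris theorem) with a citation to Jech, and your sketch is essentially the standard argument given there --- a reverse Easton iteration below $\kappa$ realizing $F$, with $A \notin D$ ensuring the stage-$\kappa$ factor of $j(\mathbb{P})$ is trivial, and an antichain-counting/diagonalization construction of the tail generic to lift $j$. Your outline is correct (the only small quibble being that $\mathrm{cf}\,F(\alpha) > \alpha$ is needed so that the nice-name count gives exactly $F(\alpha)$, as forced by K\"onig's inequality, rather than to prevent $\mathrm{Add}(\alpha, F(\alpha))$ from collapsing cardinals).
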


\begin{theorem}\label{club} Assume GCH and fix an infinite cardinal $\chi$.  Given a club $C$ on $m(\chi)$, there is a generic 
extension $V[G]$ that preserves cardinalities and cofinalities, $m(\chi)$ remains a measurable cardinal and 
$\K^\chi$ satisfies $JEP(\lambda)$ iff $\lambda \in \lim C$ or $\lambda \geq m(\chi)$.
\end{theorem}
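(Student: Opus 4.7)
The plan is to apply Fact~\ref{ParisKunen} to engineer the continuum function in the generic extension so that the strong-limit cardinals strictly between $\chi$ and $m(\chi)$ are exactly the elements of $\lim C$; the $JEP$-spectrum will then follow from Theorems~\ref{stronglimit},~\ref{abovemeasurable},~\ref{failure}, and~\ref{JEPspectrum}.

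First I would refine $C$ (if needed) to a sub-club of cardinals, which is possible because the cardinals below $m(\chi)$ form a club in the inaccessible $m(\chi)$. Let $D$ be a normal measure on $m(\chi)$ and set $A = \{\alpha : \chi^+ \le \alpha < m(\chi),\ \alpha \text{ is a successor cardinal}\}$; since $D$ concentrates on inaccessibles, $A \notin D$. For $\alpha \in A$ define $F(\alpha) = \bigl(\min\{\beta \in C : \beta > \alpha\}\bigr)^+$, a successor cardinal bounded by $m(\chi)$; then $cf F(\alpha) = F(\alpha) > \alpha$ and $F$ is monotone, so Fact~\ref{ParisKunen} applies and yields $V[G]$ preserving cardinalities, cofinalities and the measurability of $m(\chi)$, with $2^\alpha = F(\alpha)$ for $\alpha \in A$ and $2^\alpha = \alpha^+$ (the minimal value) for cardinals $\alpha \notin A$.

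I would then verify that in $V[G]$ the strong limits below $m(\chi)$ coincide with $\lim C$. If $\lambda \in \lim C$ and $\alpha \in A$ with $\alpha < \lambda$, the next $\beta \in C$ above $\alpha$ is strictly below $\lambda$, and $\beta^+ < \lambda$ since $\lambda$ is a limit cardinal; together with $2^\alpha = \alpha^+ < \lambda$ for $\alpha < \lambda$ not in $A$, this makes $\lambda$ a strong limit. Conversely, if $\chi \le \lambda < m(\chi)$ and $\lambda \notin \lim C$, pick $\alpha \in A$ with $\alpha < \lambda$ and $C \cap (\alpha,\lambda) = \emptyset$; then $F(\alpha) \ge \lambda$ and $\lambda$ is not a strong limit. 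From this computation one reads off that $2^\kappa = \kappa^+$ at every strong-limit $\kappa < m(\chi)$, and consequently $\kappa^{<\kappa} = \kappa^\kappa = \kappa^+$ at such $\kappa$ (for singular $\kappa$ this is an SCH-style calculation using that $\kappa$ is a strong limit in $V[G]$).

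The JEP-spectrum then reads off as follows. For $\lambda \in \lim C$, Theorem~\ref{stronglimit} gives $JEP(\lambda)$; for $\lambda \ge m(\chi)$, Theorem~\ref{abovemeasurable} gives $JEP(\lambda)$. For $\chi \le \lambda < m(\chi)$ with $\lambda \notin \lim C$, let $\kappa$ be the largest strong-limit cardinal $<\lambda$ (which exists because the supremum of strong limits below a non-strong-limit is itself a strong limit). Then $\lambda \in (\kappa, \beth_\omega(\kappa))$, the ambiguous interval $[\kappa^+, \kappa^\kappa)$ from Theorem~\ref{JEPspectrum} collapses to $\{\kappa^+\}$ or to $\emptyset$, and the applicable case of Theorem~\ref{JEPspectrum} gives failure of $JEP(\lambda)$ for $\lambda \ge \kappa^{++}$; the only remaining value $\lambda = \kappa^+ = 2^\kappa$ is handled directly by Theorem~\ref{failure}. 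The hardest step is the cardinal arithmetic: one must pin down $2^\mu$ for every cardinal $\mu < m(\chi)$ from the ``minimal'' clause of Paris--Kunen and then check that $\kappa^{<\kappa} = \kappa^+$ at singular strong-limit points of $\lim C$ (a local form of SCH in $V[G]$), since without this the ambiguous intervals in Theorem~\ref{JEPspectrum} would not collapse and the ``iff'' could fail; the remainder is bookkeeping with the classification results already established in Section~\ref{JEP}.
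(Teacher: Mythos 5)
Your overall strategy is the same as the paper's---apply the Paris--Kunen theorem (Fact~\ref{ParisKunen}) to rearrange the continuum function so that the strong limit cardinals in $(\chi, m(\chi))$ become exactly $\lim C$, with $2^\kappa=\kappa^+$ at those points, and then read off the spectrum from Theorems~\ref{stronglimit}, \ref{abovemeasurable}, \ref{JEPspectrum} and \ref{failure} (including the same trick of using Theorem~\ref{failure} at $\lambda=\kappa^+=2^\kappa$ for weakly compact $\kappa$). Where you genuinely differ is in the choice of the Paris--Kunen data: the paper takes $A$ to be a sparse set of regular cardinals $\kappa^\lambda_n$ cofinal in each $\lambda\in\lim C\setminus\lim\lim C$ and sets $2^{\kappa^\lambda_n}=\kappa^\lambda_{n+1}$, leaning on the ``moreover'' (minimality) clause everywhere else and on $\lim C\in D$ to see $A\notin D$; you instead take $A$ to be \emph{all} successor cardinals in $[\chi^+,m(\chi))$ and push $2^\alpha$ up to $(\min(C\setminus(\alpha+1)))^+$, using that $D$ concentrates on inaccessibles to get $A\notin D$. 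Your version is more uniform and avoids the paper's slightly delicate choice of the starting points $\kappa_0^\lambda$ (which must be taken just above $\sup(\lim C\cap\lambda)$ so that no unintended strong limits survive in the gaps); the paper's version touches fewer cardinals and makes the verification that each $\lambda\in\lim C\setminus\lim\lim C$ becomes a strong limit completely local. Both yield the same cardinal arithmetic at the points that matter: $2^\kappa=\kappa^+$, hence $\kappa^{<\kappa}=\kappa^\kappa=\kappa^+$, at every strong limit $\kappa<m(\chi)$.

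Two small inaccuracies to repair, neither fatal. First, your claim that $2^\mu=\mu^+$ for \emph{every} cardinal $\mu\notin A$ is false: if $\mu$ is a limit cardinal not in $\lim C$, monotonicity forces $2^\mu\geq F(\alpha)>\mu^+$ for the last relevant successor cardinal $\alpha<\mu$; what you actually need (and what is true) is only that $2^\alpha<\lambda$ for all $\alpha<\lambda\in\lim C$, and that $2^\kappa=\kappa^+$ when $\kappa\in\lim C$. Second, in the failure direction your reduction ``let $\kappa$ be the largest strong limit $<\lambda$'' can produce $\kappa<\chi$ when $\chi$ is not itself a strong limit (e.g.\ $\kappa=\aleph_0<\chi$), and then no case of Theorem~\ref{JEPspectrum} applies to that $\kappa$; for $\lambda$ in the interval between $\chi^+$ and the least element of $\lim C$ above $\chi$ you should instead quote case (1) of Theorem~\ref{JEPspectrum} with $\kappa=\chi$ (noting that in $V[G]$ the least strong limit above $\chi$ is exactly the least limit point of $C$ above $\chi$). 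With these adjustments, and restricting attention as the paper implicitly does to $\lambda>\chi$ (so that elements of $C$ at or below $\chi$ do not interfere with the ``iff''), your argument goes through.
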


\begin{proof} The goal is to force the cardinal arithmetic of $V[G]$ to make the limit points of $C$ the uncountable strong limit 
cardinals while preserving the measurability of $m(\chi)$.  Since limits of strong limit cardinals are also strong limits, it 
suffices that we ensure that all cardinals in $\lim C\setminus \lim\lim C$ are strong limits.

Let $U$ be a normal ultrafilter on $m(\chi)$.  $U$ contains all clubs, so $\lim C \in U$.  Given $\lambda \in \lim C\setminus 
\lim\lim C$, $\lambda$ has cofinality $\omega$ and the set $\lim C\cap \lambda$ is bounded in $\lambda$. Let $\{\kappa^\lambda_n 
\mid n < \omega\}$ be an increasing sequence of regular cardinals converging to $\lambda$ and choose $\kappa_0^\lambda$ such 
that it is the least regular cardinal above all cardinals in $\lim C\cap \lambda$.  Then define a function $F$ with domain $\{\kappa^\lambda_n \mid n < 
\omega, \lambda \in \lim C - \lim \lim C\}$ by
$$F(\kappa^\lambda_n) = \kappa^\lambda_{n+1}$$

Using Theorem \ref{ParisKunen}, we can force to preserve cofinalities and the 
measurability of $m(\chi)$ while enforcing
$$V[G] \vDash ``2^{\kappa^\lambda_n} = \kappa^\lambda_{n+1} {}"$$

Thus we have guaranteed that in $V[G]$, $\beth_\omega(\kappa_0^\lambda)=\lambda$. In particular all cardinals in  $\lim C$ are strong limit cardinals. We prove that the reverse is true too. 

Let $\mu$ be a limit cardinal (in $V[G]$) that is not in $\lim C$. We prove that $\mu$ is not strong limit. Let $\lambda$ be the least cardinal in $\lim C$ above $\mu$. If $\kappa_0^\lambda\le \mu$, then we noted that  $\lambda=\beth_\omega(\kappa_0^\lambda)$ is the least strong limit above $\kappa_0^\lambda$. In this case, $\mu$ is not strong limit. 
If $\mu<\kappa^\lambda_0$, then $\mu$ must be a singular limit of cardinals in $\lim C$. Given that $C$ is a club, $\mu$ must also belong to $\lim C$, which is a contradiction. Therefore, $\mu$ can not be a strong limit cardinal. 

So, in $V[G]$, $\lim C= \{\beth_\lambda|\lambda \text{:limit and } \lambda< m(\chi)\}$. It 
follows from the moreover clause of Theorem \ref{ParisKunen} that in $V[G]$, 
$2^{\beth_\lambda}=(\beth_\lambda)^+$, for all limit $\lambda<m(\chi)$, that is GCH holds at strong limit cardinals. 
 
We claim that $JEP(\lambda)$ if and only if  $\lambda\in \lim C$ or $\lambda\ge m(\chi)$.  The right-to-left direction is from 
Theorems \ref{stronglimit} and \ref{abovemeasurable}, and the fact that the cardinals in $\lim C$ are strong limits. 

For the 
left-to-right direction we take cases. If $\kappa$ is a strong limit that is not a weakly compact cardinal, then we claim that JEP fails for all $\lambda$ in the interval $[\kappaplus,\beth_\omega(\kappa))$. We observed already that GCH hols at strong limits. So $2^\kappa=\kappaplus$ and $\max\{\kappaplus,A(\kappa)\}=\kappaplus$. By  Lemma \ref{failJEP2}, we know that JEP fails on the interval $[\max\{\kappaplus,A(\kappa)\},\beth_\omega(\kappa))=[\kappaplus,\beth_\omega(\kappa))$, except possibly some cardinals mentioned in Lemma \ref{failJEP2}. We argue that there are no such cardinals, i.e. there is no  singular limit $\mu$ such that $2^{<\mu}  < 2^\mu$ in the interval  $[\kappaplus,\beth_\omega(\kappa))$. Let $\lambda=\beth_\omega(\kappa)$. By the way $\kappa_0^\lambda$ was defined, it must be $\kappa_0^\lambda=\kappaplus$. Let $\mu\in [\kappaplus,\beth_\omega(\kappa))$ be a limit cardinal and let $n\in \omega$ be the least such that $\kappa_{n}^\lambda<\mu<\kappa_{n+1}^\lambda$. Since $\kappa^\lambda_{n+1}$ is a regular cardinal, by the moreover clause of Theorem \ref{ParisKunen},  in $V[G]$ it holds that $2^\mu=\kappa_{n+1}^\lambda$. In addition, in $V[G]$, $2^{\kappa_n^\lambda}\le 2^{<\mu}\le 2^{\mu}=\kappa_{n+1}^\lambda$. Therefore, $2^{<\mu}=2^\mu$ and there is no gap in the failures of JEP  given by Lemma \ref{failJEP2} in the interval $[\kappaplus,\beth_\omega(\kappa))$. 

If $\kappa$ is a weakly compact cardinal, then $\kappa$ is a strong limit and again $\kappaplus=2^\kappa$. By Theorem \ref{failure}, JEP fails at $\kappaplus$. Moreover, apply Lemma \ref{failJEP2} to $\kappaplus$. Since  $\max\{\kappaplusplus,\left(\kappaplus\right)^{\left(<\kappaplus\right)}\}=\kappaplusplus$,  JEP fails on the interval $[\kappaplusplus,\beth_\omega(\kappa))$. Combined together, JEP fails on the interval $[\kappaplus,\beth_\omega(\kappa))$.
\end{proof}

\section{Amalgamation}\label{apsection}
In this section, we investigate the amalgamation spectrum of $\K^\chi$ and show that amalgamation will always eventually fail, 
regardless of large cardinals. 

We start by providing a strong condition for when elements can be identified in the amalgam.  Then we prove Lemma \ref{AP}, which 
is an analogue of Lemma \ref{extension} for disjoint amalgamation. 

Recall that by the proof of Lemma \ref{extension}, if $M_0\subm^\chi M_1$ and $K^{M_1}\setminus K^{M_0}\neq \emptyset$, then for 
every $d\in K^{M_1}\setminus K^{M_0}$ we can define a $Q^{M_0}$-complete, non-principal ultrafilter $U_d$ on $P^{M_0}$ by 
$$X\in U_d \text{ if and only if }M_1\models d\e X$$
When there is ambiguity, we will refer to this ultrafilter by $U_d^{M_0, M_1}$.

\begin{lemma}\label{dapfail}
 Assume $M_0,M_1,M_2\in\K^\chi$  and let $d_i\in 
K^{M_i}\setminus K^{M_0}$, $i=1,2$. If there exists an amalgam  $N\in\K^\chi$ of $M_1,M_2$ over $M_0$ where $d_1$ and $d_2$ are identified, then 
$U_{d_1}^{M_1,M_0}=U_{d_2}^{M_2,M_0}$.
\end{lemma}
\begin{proof}
Suppose that $d_1$ and $d_2$ are identified in the amalgam $N$. Since $M_1,M_2$ are $\subm^\chi$-substructures of $N$, 
$U_{d_1}^{M_1,M_0}=U_{d_1}^{N,M_0}=U_{d_2}^{N,M_0}=U_{d_2}^{M_2,M_0}$.
%

\end{proof}

\begin{definition}
 Let $N, M_l\in \K^\chi$, $l=0,1,2$ and $N$ is an amalgam of  $M_1$ and $M_2$ over $M_0$. Say that the amalgamation is 
\emph{disjoint 
on the $K$-sort} if in $N$ no elements of $K^{M_1}\setminus K^{M_0}$ are identified with any elements of $K^{M_2}\setminus 
K^{M_0}$.
\end{definition}

\begin{lemma}\label{AP} Let $M_0,M_1,M_2\in\K^\chi$ with $M_0\subm^\chi M_1$ and $M_0\subm^\chi M_2$.
There is an amalgamation of $M_1,M_2$ over $M_0$ that is disjoint on the $K$-sort if and only if for every $d\in 
K^{M_i}\setminus K^{M_0}$  the ultrafilter $U_d^{M_i,M_0}$ on $P^{M_0}$ defined above can be 
extended to a non-principal ultrafilter on $P^{M_{3-i}}$ that is $Q^{M_{3-i}}$-complete, for $i=1,2$. 
\end{lemma}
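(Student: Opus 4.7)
For the forward direction, I would adapt the proof of Lemma~\ref{extension}. Given an amalgam $N$ of $M_1, M_2$ over $M_0$ that is disjoint on the $K$-sort, after identifying each $M_j$ with its image in $N$, the disjointness condition yields $K^{M_1} \cap K^{M_2} = K^{M_0}$. For $d \in K^{M_1} \setminus K^{M_0}$, I would define $V_d := \{X \in P^{M_2} : N \vDash d \e X\}$. Using $M_2 \subm^\chi N$, this should be a non-principal ultrafilter on $P^{M_2}$ by the same argument as in Lemma~\ref{extension}: non-principality uses that $M_2, N$ agree on finite subsets, which would otherwise force $d \in K^{M_2}$ if $V_d$ were generated by a singleton $\{x\}$. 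The $Q^{M_2}$-completeness follows from clause~(6) applied in $N$. For $X \in P^{M_0}$, $X \in V_d \iff M_1 \vDash d \e X \iff X \in U_d^{M_1, M_0}$, so $V_d$ extends $U_d^{M_1, M_0}$. The case $d \in K^{M_2} \setminus K^{M_0}$ is symmetric.

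For the converse, given the hypothesized extensions $V_d$ on $P^{M_{3-i}}$ for each $d \in K^{M_i} \setminus K^{M_0}$, I would build the amalgam $N$ directly inside $\cP(K^N)$. First, set $K^N := K^{M_1} \sqcup_{K^{M_0}} K^{M_2}$. For each $X \in P^{M_j}$, define
\[
\widetilde{X}^N := \widehat{X}^{M_j} \cup \{d \in K^{M_{3-j}} \setminus K^{M_0} : X \in V_d\} \subseteq K^N,
\]
where $V_d$ in the formula is the ultrafilter on $P^{M_j}$ (as $d \in K^{M_{3-j}}$). I expect the two possible definitions to coincide on $P^{M_0}$, since each $V_d$ restricted to $P^{M_0}$ equals $U_d^{M_{3-j}, M_0}$ and so codes actual $M_{3-j}$-membership. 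Then I would take $P^N$ as the Boolean subalgebra of $\cP(K^N)$ generated by these $\widetilde{X}^N$'s, and $Q^N$ as the set of sequences $\widetilde{A}^N := \langle \widetilde{\pi_\alpha^{M_j}(A)}^N \rangle_{\alpha < \chi}$ for $A \in Q^{M_1} \cup Q^{M_2}$, with $\pi_\alpha^N$ the projection and $\cap^N(\widetilde{A}^N) := \widetilde{\cap^{M_j}(A)}^N$.

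The verification should go as follows: the ultrafilter axioms make $X \mapsto \widetilde{X}^N$ a Boolean homomorphism on each $P^{M_j}$, and $\e$-extensionality in $M_j$ makes it injective. Non-principality of every $V_d$ yields $\widetilde{\{x\}}^N = \{x\}$ for each singleton, so $P^N$ contains every finite subset of $K^N$ and $M_j \subm^\chi N$ (agreement on finite subsets, since for $X \in P^{M_j}$ with $\widehat{X}^{M_j}$ finite, no $V_d$ contains $X$). Clause~(6) in $N$ for cross-sort pairs should reduce to the $Q^{M_{3-j}}$-completeness of the $V_d$'s. The main obstacle will be well-definedness of $\cap^N$ across sorts: if $A \in Q^{M_1}$ and $B \in Q^{M_2}$ induce the same tuple in $(P^N)^\chi$, one must show $\widetilde{\cap^{M_1}(A)}^N = \widetilde{\cap^{M_2}(B)}^N$. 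I expect this to reduce, via clause~(6) inside each $M_j$ together with $Q$-completeness of the $V_d$'s, to the equality $\bigcap_{\alpha<\chi} \widetilde{\pi_\alpha^{M_1}(A)}^N = \bigcap_{\alpha<\chi} \widetilde{\pi_\alpha^{M_2}(B)}^N$, which is immediate from the hypothesis. Disjointness of the amalgam on the $K$-sort will hold by construction.
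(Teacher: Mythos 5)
Your proposal is correct and takes essentially the same approach as the paper: the forward direction is the paper's argument verbatim (your $V_d$ is the ultrafilter $U_d^{N,M_{3-i}}$, shown non-principal, $Q^{M_{3-i}}$-complete, and an extension of $U_d^{M_i,M_0}$ exactly as in Lemma~\ref{extension}), and the converse rests on the same key device of letting the extended ultrafilters decide membership of the new $K$-elements in the other model's $P$-sets, with $Q$-completeness giving clause~(6) and non-principality giving agreement on finite subsets. The only difference is presentational: you realize the amalgam concretely inside $\cP(K^N)$ via $X\mapsto\widetilde{X}^N$, so the Boolean and extensionality axioms and the cross-model identifications come for free and the genuine checks (injectivity, coincidence on $P^{M_0}$, well-definedness of $\cap^N$) are isolated, whereas the paper forms $P^N$ and $Q^N$ as unions with explicit identifications and defines $\e^N$ on generators --- the same construction in different packaging.
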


Note that $U_d^{M_i,M_0}$ is an ultrafilter on the Boolean Algebra (defined 
by) $P^{M_0}$.  Since $M_0\subm^\chi M_{3-i}$, we have that $P^{M_0}$ is a sub-Boolean Algebra of $P^{M_{3-i}}$. Consequently,  
$U_d^{M_i,M_0}$ is a filter on $P^{M_{3-i}}$. Then Lemma \ref{AP} says that there is an amalgamation of $M_1,M_2$ over $M_0$ which is disjoint on the $K$-sort exactly when 
$U_d^{M_i,M_0}$ can be extended to a non-prinicipal and $Q^{M_{3-i}}$-complete ultrafilter on $P^{M_{3-i}}$ and this can be done for every $d$.

\begin{proof}[Proof of Lemma \ref{AP}] 

First, suppose that $N$ is a $K$-disjoint amalgam of $M_1,M_2$ over $M_0$. Consider the case where $d\in K^{M_1}\setminus 
K^{M_0}$. The case $d\in K^{M_2}\setminus K^{M_0}$ is symmetric and we omit it. 

In the amalgam $d$ is an element of $K^N\setminus K^{M_2}$. Define as 
before the ultrafilter $U_d^{N,M_2}$. Then $U_d^{N,M_2}$ is a non-principal ultrafilter on $P^{M_2}$ that is 
$Q^{M_2}$-complete. We need to prove that $U_d^{N,M_2}$ extends $U_d^{M_1,M_0}$. 
Let $X\in P^{M_0}$. Then 
\begin{align*}
 X\in U_d^{N,M_2} &\text{ iff } N\models d\e X\\
           &\text{ iff } M_1\models d\e X\\
           &\text{ iff } X\in U_d^{M_1,M_0}.
\end{align*}

Second, suppose that we have this extension property and let $V_d^{M_i,M_0}$ denote the $Q^{M_{3-i}}$-complete ultrafilter on 
$P^{M_{3-i}}$ that extends $U_d^{M_i,M_0}$. Furthermore, suppose that $M_1$ and $M_2$ are disjoint except for the common copy of 
$M_0$ and that the elements of $Q^{M_i}$ are actually $\chi$-sequences from $P^{M_i}$.

We define the amalgam $N$ of $M_1,M_2$ over $M_0$: $K^N$ equals $K^{M_1}\cup K^{M_2}$.

$P^N$ is the Boolean Algebra generated by $P^{M_1}\cup P^{M_2}$ and all the finite subsets of $K^N$. 
We identify two elements $X\in P^{M_1}$ and $Y\in P^{M_2}$, if $\widehat{X}^{M_1}$ and $\widehat{Y}^{M_2}$ is the same subset of 
$K^{M_0}$. 

$Q^N$ equals $Q^{M_1}\cup Q^{M_2}$, 
modulo the requirement that if $A\in Q^{M_1}$ and $B\in Q^{M_2}$ are such that for all $\alpha<\chi$, 
$\pi_\alpha^{M_1}(A)=\pi_\alpha^{M_2}(B)$, then we identify $A$ and $B$ in the amalgam. 

All that remains is to define $\e^N$.  It suffices to define $\e^N$ on $K^N \times (P^{M_1} \cup P^{M_2})$, and then extend it to 
the rest of $P^N$ by the Boolean Algebra rules.  We require that $\e^N$ extend $\e^{M_1} \cup \e^{M_2}$.  Suppose $d \in 
K^{M_i}\setminus K^{M_0}$ and $X \in P^{M_{3-i}}$. Then we set 
$$N \vDash d \in X \iff X \in V_d^{M_i, M_0}$$

Notice that if $X\in P^{M_0}$, then $X\in V_x^{M_i,M_0}$ iff $X\in U_x^{M_i,M_0}$ 
iff $M_i\models x\e X$. So, $N$ and $M_i$ agree on $\e$ on their common domain. 

The reader can verify that $N \in \K^\chi$ and that $M_i \subm^\chi N$ working as in the proof of Lemma 
\ref{extension}.  In particular, the $Q^{M_{3-i}}$-completeness of 
$V_d^{M_i, M_0}$ is crucial as in Lemma \ref{extension}.
\end{proof}

Observe that the proof of the above Lemma does not yield a disjoint amalgam for $M_1, M_2$. The reason is that some elements of 
$P^{M_1},P^{M_2}$ and some elements of $Q^{M_1},Q^{M_2}$ may be identified. Nevertheless, amalgamation is disjoint on the 
$K$-sort.

Using Lemma \ref{AP}, we prove that $\K^\chi$ fails amalgamation above $2^{\chi^+}$. The idea of the proof is due to Spencer 
Unger.

\begin{theorem}\label{negative}

Let $\kappa \geq 2^{\chi^+}$. Then $\K^\chi$ fails $AP(\kappa)$.  
\end{theorem}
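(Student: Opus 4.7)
The plan is to exhibit a specific triple $M_0, M_1, M_2 \in \K^\chi_\kappa$ with $M_0 \subm^\chi M_1$ and $M_0 \subm^\chi M_2$ whose amalgamation is blocked by an ultrafilter-theoretic obstruction: any putative amalgam would force a non-principal $\chi^+$-complete ultrafilter on $\cP(\chi^+)$ to exist, which is impossible because $\chi^+$ is a successor cardinal. The hypothesis $\kappa \geq 2^{\chi^+}$ will be used only to guarantee that all three models, and in particular the enlarged $P^{M_2}$, can be arranged to have cardinality exactly $\kappa$.

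First I would take $M_0$ with $K^{M_0} = \chi^+ \sqcup D$, where $|D| = \kappa$ plays the role of harmless padding, and $P^{M_0}$ the Boolean subalgebra of $\cP(K^{M_0})$ generated by the set $\chi^+$ together with all singletons. Elements of $P^{M_0}$ then have the form $X' \cup B$ with $X'$ finite or cofinite in $\chi^+$ and $B$ finite or cofinite in $D$, and the cofinite filter on the $\chi^+$-coordinate defines a non-principal ultrafilter $U$ on $P^{M_0}$; $U$ is $Q^{M_0}$-complete because $cf(\chi^+) > \chi$ makes the $\chi$-intersection of cofinite subsets of $\chi^+$ again cofinite. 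By Lemma \ref{extension}, I can extend $M_0$ to $M_1 \in \K^\chi_\kappa$ by one new $K$-element $d$ realizing $U$.

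Next I would define $M_2$ by keeping $K^{M_2} = K^{M_0}$ but enlarging $P^{M_2}$ to contain the full powerset $\cP(\chi^+)$ of the $\chi^+$-part, together with the finite/cofinite algebra on $D$; since $\kappa \geq 2^{\chi^+}$, this gives $|P^{M_2}| = 2^{\chi^+} \cdot \kappa = \kappa$. Into $Q^{M_2}$ I would place, for every partition $\{Y_j : j < \mu\}$ of $\chi^+$ with $\mu \leq \chi$, the $\chi$-sequence $\langle \chi^+ \setminus Y_j : j < \mu \rangle$, padded by $\one$ to length $\chi$; each such intersection computes to $\zero$ in $P^{M_2}$. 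There are at most $2^{\chi^+} \leq \kappa$ such partitions, so $\|M_2\| = \kappa$. The required inclusions $M_0 \subm^\chi M_1$ and $M_0 \subm^\chi M_2$ are straightforward, since every finite element $X \in P^{M_0}$ lies outside $U$.

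Finally, suppose for contradiction that $N$ amalgamates $M_1$ and $M_2$ over $M_0$. Because $K^{M_2} = K^{M_0}$, the element $d$ lies in $K^N \setminus K^{M_2}$, and exactly as in the proof of Lemma \ref{extension}, $V := U_d^{N, M_2}$ is a non-principal, $Q^{M_2}$-complete ultrafilter on $P^{M_2}$ extending $U$. The restriction of $V$ to $\cP(\chi^+)$ is then a non-principal ultrafilter on $\cP(\chi^+)$ extending the cofinite filter, and for each partition of $\chi^+$ into at most $\chi$ parts the matching sequence in $Q^{M_2}$ forces one part into this restriction. By Fact \ref{partition}, the restriction is then $\chi^+$-complete, contradicting the fact that $\chi^+$ is a successor cardinal and hence not measurable above $\chi$. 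The main technical obstacle throughout is keeping all three models of size exactly $\kappa$: that is precisely what the hypothesis $\kappa \geq 2^{\chi^+}$ delivers, bounding both $|P^{M_2}|$ and the number $2^{\chi^+}$ of partition witnesses by $\kappa$.
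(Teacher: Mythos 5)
Your proof is correct and is essentially the paper's own argument: the paper likewise reverse-engineers Lemma \ref{AP} with a triple in which $M_0$ carries a small algebra and a distinguished filter, $M_1$ adds a single new $K$-point realizing that filter, and $M_2$ enlarges $P$ and $Q$ over the same $K$-sort so that any $Q^{M_2}$-complete extension of the filter would yield a non-principal, sufficiently complete ultrafilter on $\chi^+$ (impossible since $\chi^+$ is a successor); the only cosmetic difference is that the paper codes $\chi^+$ by partitioning $\kappa$ into blocks $\{A_\alpha \mid \alpha<\chi^+\}$ and uses the tail filter, where you use a literal copy of $\chi^+$ plus a padding set $D$ and the cofinite filter, with $\kappa\ge 2^{\chi^+}$ doing exactly the cardinality bookkeeping you describe. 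One small repair: the intersection of $\chi$-many cofinite subsets of $\chi^+$ is only co-$\chi$-sized, not cofinite, so either take $Q^{M_0}=\emptyset$ (as the paper does) or observe that any sequence admissible in $Q^{M_0}$ must have its intersection lie in the finite/cofinite-type algebra $P^{M_0}$, which forces that intersection to be cofinite on the $\chi^+$-part and hence in $U$, so the claimed $Q^{M_0}$-completeness still holds.
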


\begin{proof} First, we will build a filter $F$ on $\cP(\kappa)$ generated by $\leq\kappa$-many sets that cannot be extended to a 
$\chi^+$-complete ultrafilter on all of $\cP(\kappa)$; indeed, we will identify a Boolean Algebra $P_1$ and collection of 
$\chi$-sequences $Q_1$ such that $F$ cannot be extended to a $Q_1$-complete ultrafilter on $P_1$.  To do so, partition $\kappa$ 
into $\{A_\alpha \mid \alpha < \chi^+\}$ and define the filter $F$ on $\kappa$ by, for $X \subset \kappa$,

$$X \in F \text{ if and only if there is } \beta < \chi^+\text{ such that } \bigcup_{\alpha > \beta} A_\alpha \subset X$$

Note that $F$ is $\chi^+$-complete and contains every cofinite set (and even the co-$\chi$-sized sets).  Let $P_0 \subset 
\cP(\kappa)$ be the Boolean Algebra generated by the sets measured by $F$. 
Then set $P_1 \supset P_0$ be the Boolean Algebra generated by

$$\left\{\bigcup_{\alpha \in S} A_\alpha \mid S \subset \chi^+\right\}$$

Now, define our set of $\chi$-sequences by

$$Q_1:= \left\{\left\langle \bigcup_{\alpha \in S_i} A_\alpha \mid i < \chi\right \rangle \mid \left\langle S_i 
\right\rangle_{i<\chi} \in {}^\chi \cP(\chi^+) \right\}$$

{\bf Claim:} $F$ cannot be extended to a $Q_1$-complete filter $G$ that measures all sets in $P_1$.\\

Suppose it could.  From $G$, we can define a non-principal ultrafilter $U$ on $\chi^+$ by, for $Y \subset \chi^+$,
$$Y \in U \text{ if and only if } \bigcup_{\alpha \in Y} A_\alpha \in G$$
Each of these sets is in $P_1$ by construction, so this is a non-principal ultrafilter following the standard argument.  
Moreover, $U$ is $\chi^+$-complete precisely because $G$ is $Q_1$-complete.  This is a contradiction because there can be no 
$\chi^+$-complete, non-principal ultrafilter over $\chi^+$.  This completes the proof of the claim.

Second, we reverse engineer the proof of Lemma \ref{AP} to show that this construction forces a failure of amalgamation.  We 
build a triple of models $M_0,M_1,M_2$. Unless otherwise specified $\e$ is the regular $\in$ (`belongs to') relation and the 
Boolean Algebra operations are the usual intersection, union and complement. We specify $(K, P, Q)$:
\begin{itemize}
	\item $M_0$ is defined by $(\kappa, P_0, \emptyset)$;
	\item $M_1$ is defined by $(\kappa \cup \{d\}, P_0, \emptyset)$; $d$ belongs to some $X\in P_0$ if and only if $X\in 
F$; and
	\item $M_2$ is defined by $(\kappa, P_1, Q_1)$.
\end{itemize}
Note that $F$ is an ultrafilter on $P_0$ as $P_0$ contains precisely the sets that $F$ measures.  By construction, $M_0 
\subm^\chi M_1, M_2$.  Tracing the definition, $U_{d}^{M_1, M_0}=F$. So by Lemma \ref{AP}, the triple 
$(M_0,M_1,M_2)$ can be amalgamated iff $F$ can be extended to a $Q^{M_2}$-complete filter on $P^{M_2}$.  However, this is 
impossible by the claim.  

We finish the proof by observing that all these models have size 
$\kappa+\left(2^{\chi^+}\right)^\chi = \kappa$. So $\K^\chi_\kappa$ fails AP$(\kappa)$.
\end{proof}

\section*{Acknowledgements} The authors would like to thank John Baldwin for his comments on an early version of this paper, and the anonymous referee for improving the paper and catching an error in a previous version of Lemma \ref{failJEP2}. The 
second author would like to thank the Aristotle University of Thessaloniki. This paper was written while he was a visitor at 
the Aristotle University.

 \bibliographystyle{plain}   

\bibliography{AllBibliography}  
\end{document}